\documentclass[11pt,a4paper,leqno,noamsfonts]{amsart}
\usepackage[english]{babel}
\usepackage[dvipsnames]{xcolor}
\usepackage{graphicx,pifont}
\usepackage[utopia]{mathdesign}
\usepackage{bm}
\usepackage[a4paper,top=3cm,bottom=3cm,
           left=3cm,right=3cm,marginparwidth=60pt]{geometry}
\usepackage[utf8]{inputenc}
\usepackage{braket,caption,comment,mathtools,stmaryrd,enumitem} 
\usepackage[usestackEOL]{stackengine}
\usepackage{multirow,booktabs,microtype,relsize,soul}
\usepackage[foot]{amsaddr}
\usepackage[colorlinks,bookmarks]{hyperref} %
      \hypersetup{colorlinks,%
            citecolor=britishracinggreen,%
            filecolor=black,%
            linkcolor=cobalt,%
            urlcolor=cornellred}
      \numberwithin{equation}{section}
      
\usepackage{thmtools}
\usepackage[capitalise]{cleveref}
\Crefname{conjecture}{Conjecture}{Conjectures}

\input{macros.tex}
\usepackage{tikz}
\usetikzlibrary {patterns,patterns.meta}
\usetikzlibrary{arrows.meta}
\graphicspath{ {./images/} }
\usetikzlibrary{calc}
\definecolor{pastelpurple}{RGB}{180,160,210}
\definecolor{pastelblue}{RGB}{150,180,210}
\definecolor{pastelgreen}{RGB}{180,220,160}
\definecolor{pastelgreen2}{RGB}{140,200,140}
\definecolor{pastelyellow}{RGB}{255,250,160}
\definecolor{pastelorange}{RGB}{255,200,140}
\definecolor{pastelgray}{RGB}{180,200,200}
\definecolor{pastelred}{RGB}{240,150,150}
\definecolor{pastelblack}{RGB}{80,80,80}
\definecolor{darkpastelorange}{RGB}{210,150,110}

\newlength{\figsize}
\title[On the combinatorics of the refined 1-leg DT/PT correspondence]{On the combinatorics of the refined 1-leg DT/PT correspondence}
\author{Davide Accadia, Danilo Lewa\'nski, Sergej Monavari}
\keywords{Plane partitions, DT/PT correspondence, Fock space}
\subjclass[2020]{Primary 05A19; Secondary 14N35.}

\begin{document}
\NewDocumentCommand{\stripedBox}{m O{} O{} O{} m m m}{
    
    \begin{scope}[shift={(#1)}]
        \if\relax\detokenize{#2}\relax
            \draw[dashed] (0,0) rectangle (#5,#6) node[pos=0.5] {#7};
        \else
            \path[draw, thin, fill=#2, 
                path picture={
                    \coordinate (Origin) at (path picture bounding box.south west);
                    \if\relax\detokenize{#3}\relax\else
                        \fill[pattern={Lines[angle=45, 
                              distance={0.2*30pt/sqrt(2)}, 
                              line width=0.2*10.6pt, 
                              yshift=0pt]}, 
                              pattern color=#3] 
                              (path picture bounding box.south west) rectangle (path picture bounding box.north east);
                    \fi
                    \if\relax\detokenize{#4}\relax\else
                        \fill[pattern={Lines[angle=45, distance={0.2*30pt/sqrt(2)}, line width=0.2*7.07pt, yshift=0pt]}, 
                              pattern color=#2] 
                              (path picture bounding box.south west) rectangle (path picture bounding box.north east);
                        \fill[pattern={Lines[angle=45, distance={0.2*30pt/sqrt(2)}, line width=0.2*7.07pt, yshift=-0.2*10pt]}, 
                              pattern color=#3] 
                              (path picture bounding box.south west) rectangle (path picture bounding box.north east);
                        \fill[pattern={Lines[angle=45, distance={0.2*30pt/sqrt(2)}, line width=0.2*7.07pt, yshift=-0.2*20pt]}, 
                              pattern color=#4] 
                              (path picture bounding box.south west) rectangle (path picture bounding box.north east);
                    \fi
                }] (0,0) rectangle (#5,#6) node[pos=0.5] {#7};
        \fi
    \end{scope}
}

\begin{abstract}
We provide a new proof of a result of Bessenrodt on the relation among the generating series of reversed plane partitions and skew plane partitions, motivated by the geometric DT/PT wallcrossing formula for local curves recently proved by the third author. This also recovers a result of Sagan.

We moreover establish various new closed formulas for the weighted enumeration of reversed and skew plane partitions, proving a result dual to a theorem by Gansner, we find a new identity on the generating series counting internal and external hooks of a given Young diagram, and we combine the latter with Bessenrodt's theorem. Finally, we interpret our results as identities in the Fock space via the bosonic/fermionic formalism.
\end{abstract}

\maketitle
{\hypersetup{linkcolor=black}\tableofcontents}

\maketitle

\section{Introduction}
\subsection{Overview}
The study of plane partitions and their various generalisations is a central theme in combinatorics, with deep connections to representation theory and enumerative geometry, in particular for enumerative problems concerning Hilbert schemes, see e.g.~\cite{GMMR2, MOTIVES, Nakajima}. Motivated by the work of the third author on \emph{refined Donaldson-Thomas invariants of local curves} and \emph{topological string partition functions} \cite{IlMona2025}, we offer a new perspective on classical results of Bessenrodt \cite{Bessenrodt} and Gansner \cite{Gansner_reversed_plane_partitions}, which  relate the enumeration problems of reverse plane partitions and skew plane partitions.

Historically, Gansner's work \cite{Gansner_reversed_plane_partitions} refined the algorithm of Hillman and Grassl \cite{Hillman-Grassl} describing the correspondence between hook-lengths and reverse plane partitions.

\subsection{Bessenrodt's result and thin partitions}
Let $\lambda$ be a Young diagram. Denote by $\mathcal{H}(\lambda)$ the set of \emph{internal} hooks of $\lambda$, and by $\mathcal{H}'(\lambda)$ the set of \emph{external} hooks of $\lambda$. We remark that  an internal (resp.~external) hook of $\lambda$ is equivalent to  a $\Box\in \lambda$ (resp.~$\Box\notin \lambda$) where the arm and leg lengths of the hooks (i.e. their hook type) are uniquely determined by the boundary of $\lambda$. Bessenrodt \cite{Bessenrodt} established the following bijection.

\begin{theorem}[{\cite[Thm.~3.2]{Bessenrodt}}]\label{thm: Bessen_original}
    Let $\lambda$ be a Young diagram. There is a bijection of sets
    \begin{align*}
\mathcal{H}'(\lambda) \longleftrightarrow \mathcal{H}'(\varnothing)\cup \mathcal{H}(\lambda),
    \end{align*}
    which preserves hook types.
\end{theorem}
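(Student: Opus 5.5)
The plan is to encode $\lambda$ by its boundary lattice path and reduce the statement to a discrete intermediate-value (crossing-count) argument for each fixed hook type.

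\textbf{Step 1: the boundary word.} Write the boundary of $\lambda$, extended by the two infinite coordinate axes, as a bi-infinite word $w=(w_p)_{p\in\mathbb Z}$ in the letters $E$ (horizontal step) and $V$ (vertical step). Far to one side $w$ consists only of $V$'s and far to the other only of $E$'s. Every cell $\Box$ of the quadrant, inside or outside $\lambda$, determines a unique pair of positions $p<q$: project $\Box$ horizontally and vertically onto the boundary path. These two steps have different letters.
\begin{itemize}
\item One order of letters, say $(w_p,w_q)=(V,E)$, occurs exactly for the cells $\Box\in\lambda$.
\item The opposite order $(E,V)$ occurs exactly for the cells $\Box\notin\lambda$.
\end{itemize}
This correspondence is a bijection between cells and such pairs. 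Moreover the hook type of $\Box$ is read off the letters strictly between $p$ and $q$: the arm length $a$ is the number of $E$'s and the leg length $l$ the number of $V$'s there. I will fix the conventions for external arm and leg so that this holds verbatim. For $\lambda=\varnothing$ the word is $\cdots VVV\,EEE\cdots$, and each type $(a,l)\in\mathbb Z_{\ge0}^2$ occurs exactly once among the external cells.

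\textbf{Step 2: reduce to a counting identity.} The theorem is equivalent to the following statement for each fixed $(a,l)$:
\[
\#\{\Box\notin\lambda \text{ of type }(a,l)\} \;=\; 1+\#\{\Box\in\lambda\text{ of type }(a,l)\}.
\]
Once this holds for every $(a,l)$, any matching of equal-type elements gives the required type-preserving bijection.

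\textbf{Step 3: the crossing argument.} Set $N=a+l+1$ and $c(p)=\#\{E\text{'s among } w_p,\dots,w_{p+N-1}\}$.
\begin{itemize}
\item For $p\ll0$ the window contains only $V$'s, so $c(p)=0\le a$.
\item For $p\gg0$ the window contains only $E$'s, so $c(p)=N>a$.
\item Passing from $p$ to $p+1$ changes $c$ by $[w_{p+N}=E]-[w_p=E]\in\{-1,0,1\}$.
\end{itemize}
Hence the number of steps where $c$ goes from $a$ to $a+1$ exceeds the number of steps where it goes from $a+1$ to $a$ by exactly one.

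Now interpret these steps with $q=p+N$.
\begin{itemize}
\item An up-step $a\to a+1$ happens exactly when $w_p=V$, $w_q=E$ and the interior $w_{p+1},\dots,w_{q-1}$ (which has length $a+l$) contains exactly $a$ letters $E$. That is precisely a pair of the kind in Step 1 with type $(a,l)$.
\item A down-step $a+1\to a$ happens exactly when $w_p=E$, $w_q=V$ and the interior again contains exactly $a$ letters $E$, i.e.\ a pair of the opposite kind with type $(a,l)$.
\end{itemize}
Up-crossings exceed down-crossings by one. So the kind of pair counted by up-crossings must be the one with the extra element, namely the external cells, given the $\varnothing$ check in Step 1. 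This yields the identity of Step 2.

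\textbf{Main obstacle.} The delicate point is Step 1: fixing the orientation and the conventions for external arm and leg (measured from the boundary of $\lambda$) so that three things hold simultaneously. The cell-to-pair map must be bijective, internal and external cells must land in opposite letter orders, and the type must be exactly the count of interior letters. I would verify this first on $\lambda=\varnothing$ and on $\lambda=(1)$, and then fix the orientation of $w$ accordingly. The crossing count itself is routine.
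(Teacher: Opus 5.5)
\textbf{Verdict.} Your argument is correct and proves the statement for every $\lambda$. There is one slip to fix. With the word oriented as $\cdots VVV\,\cdots\,EEE\cdots$, the $(V,E)$ pairs are the \emph{external} cells, not the internal ones as the ``say'' in Step~1 asserts.

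\textbf{The fix.} For $\Box=(i,j)\notin\lambda$, the leftward projection hits the $V$-step of row $i$ at horizontal position $\lambda_{i+1}\le j$. This step precedes the $E$-step of column $j$. The letters strictly between them are $j-\lambda_{i+1}=a_\lambda(\Box)$ letters $E$ and $i-\lambda'_{j+1}=\ell_\lambda(\Box)$ letters $V$, where $\lambda'$ is the conjugate partition. For $\Box\in\lambda$ the order is reversed, and one gets $\lambda_{i+1}-j-1$ and $\lambda'_{j+1}-i-1$, matching the paper's conventions. Step~3 ends up using the right assignment, so nothing breaks, but it is better to verify it directly as above than to infer it from $\varnothing$. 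With this in place, the level-$a$ crossing count of $c(p)$ gives $\#\{\text{external of type }(a,\ell)\}=1+\#\{\text{internal of type }(a,\ell)\}$. All counts are finite because $w$ is eventually constant on both sides, so this is exactly the theorem.

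\textbf{Comparison with the paper.} Your route is genuinely different from the paper's. The paper only cites Bessenrodt for the general statement. Its own proof (\Cref{thm: Bessen_original_thin}, plus the $K\le 2$ discussion) works as follows:
\begin{itemize}
\item it cuts the complement of $\lambda$ into tectonic plates;
\item it translates each plate north-west by $\Theta_\lambda$, so that external hook types of $\lambda$ become external hook types of $\varnothing$;
\item it shows the images cover the quadrant, giving one copy of $\mathcal{H}'(\varnothing)$;
\item it shows the multiply covered part is a union of overlaps of diagonally adjacent plates, each a rectangle congruent to a $\lambda$-tile and carrying exactly its internal hook types, giving $\mathcal{H}(\lambda)$.
\end{itemize}
This yields an explicit bijection that moves rigid blocks of cells, and a transparent picture. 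However, the overlap analysis of \Cref{prop:123:stella} depends on the thinness inequalities, and the non-thin $K=2$ cases need ad hoc cutting and regluing. So that method does not reach arbitrary $\lambda$.

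Your argument lives on the boundary (Maya) word, closer in spirit to Bessenrodt's original proof and to the fermionic picture of \Cref{sec:Fock}. It is short and fully general. What it gives up is an explicit geometric map, since it only equates counts type by type.

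\textbf{Making it canonical.} If you want a canonical bijection, note that the level-$a$ crossings alternate up, down, $\dots$, up. Pair each downward crossing with the next upward one. This leaves exactly the first upward crossing, which you match with the unique element of type $(a,\ell)$ in $\mathcal{H}'(\varnothing)$.
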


Bessenrodt's original strategy for proving \Cref{thm: Bessen_original} is to carefully keeping track of the relation between addable and removable hooks in $\lambda$ of given hook and arm length. The proof exploits the representation of partitions as Maya diagrams and it admits a natural interpretation in terms of bosonic operators in the Fock space (cf.~also \cite{CB_BijDTPT}).
\smallbreak
Our main result is an alternative proof of \Cref{thm: Bessen_original} for the case of \emph{thin partitions} (cf.~\Cref{thm: Bessen_original_thin}), a  class of partitions which we introduce in \Cref{def:thin}. Our argument is based on a certain subdivision of the Young diagram, in which the constituent rectangles, called tectonic plates, are shifted in a careful and controlled way, that we call tectonic movement. This tectonic movement preserves hook types and makes the underlying combinatorial structure transparent, allowing the desired equality to emerge naturally.

\subsection{Hook--to--strip}
\label{sec:hook:strips:intro}

For $d \geq \ell > 0$, consider the following two collections of pairs of partitions and hooks
    \begin{align*}
         S_{d,\ell} \coloneqq \Big\{ (\lambda, h)  \Big{|} \lambda \in \mathcal{P}_d, h \text{ internal hook of } \lambda, \text{ of hook length } \ell \Big\},
         \\
         S'_{d,\ell} \coloneqq \Big\{ (\lambda, h)  \Big{|} \lambda \in \mathcal{P}_d, h \text{ external hook of } \lambda, \text{ of hook length } \ell \Big\}.
    \end{align*}
These sets are empty whenever $d$ is negative. We prove the following new combinatorial identity relating sets of internal and external hooks.

\begin{theorem}[\Cref{theorem:hooks:strips}]\label{theorem:hooks:strips:intro}
 Let  $d \geq \ell > 0$. There exists a bijection of sets
    \begin{equation*}
        S_{d,\ell} \longleftrightarrow S'_{d-\ell,\ell},
    \end{equation*}
    which preserves the positions of both hands and feet of the hooks, in particular preserving both content sets and hook types.
\end{theorem}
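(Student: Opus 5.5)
The bijection I would use is the classical correspondence between hooks and border strips (rim hooks). Internal hooks of $\lambda$ of length $\ell$ correspond to removable border strips of $\lambda$ of size $\ell$. External hooks of $\mu$ of length $\ell$ correspond to addable border strips of $\mu$ of size $\ell$. So the map is
\[
(\lambda,h)\longmapsto (\lambda\setminus R_h,\ h'),
\]
where $R_h$ is the rim hook cut out by $h$ and $h'$ is the external hook of $\mu=\lambda\setminus R_h$ determined by $R_h$. Since $|R_h|=\ell$, we get $\mu\in\mathcal{P}_{d-\ell}$, so the sizes match.

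\textbf{Step 1 (the forward map).} Let $h$ be the internal hook at the box $(i,j)\in\lambda$. Its hand is $(i,\lambda_i)$, its foot is $(\lambda'_j,j)$, and its length is $\ell=(\lambda_i-j)+(\lambda'_j-i)+1$. Let $R_h$ be the set of boxes of the rim of $\lambda$ running from $(i,\lambda_i)$ to $(\lambda'_j,j)$. It is standard that $R_h$ is a connected skew shape with no $2\times 2$ square, that $|R_h|=\ell$, and that $\mu=\lambda\setminus R_h$ is again a partition. Moreover, within rows $i,\dots,\lambda'_j$ and columns $j,\dots,\lambda_i$, the strip $R_h$ occupies exactly the boxes of $\lambda$ that lie weakly south-east of the shifted rim. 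In particular,
\[
\mu_{\lambda'_j}=j-1 \quad\text{and}\quad \mu'_{\lambda_i}=i-1 .
\]
Now take $h'$ to be the external hook of $\mu$ at the box $(\lambda'_j,\lambda_i)$. This box lies outside $\mu$, because $\mu_{\lambda'_j}=j-1<\lambda_i$. With the convention that an external hook at $b\notin\mu$ has its arm going left to the first box outside $\mu$ in that row and its leg going up to the first box outside $\mu$ in that column, the two displayed equalities show the following. The arm of $h'$ ends at $(\lambda'_j,j)$ and its leg ends at $(i,\lambda_i)$. These are exactly the foot and hand of $h$, with their roles interchanged. Consequently the length of $h'$ is again $(\lambda_i-j)+(\lambda'_j-i)+1=\ell$, and the hook type (arm length, leg length) and the content set of the extremities are preserved, as the statement requires.

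\textbf{Step 2 (the inverse).} Given $(\mu,h')\in S'_{d-\ell,\ell}$, let the external box of $h'$ be $(a,b)$, with extremities $(a,\mu_a+1)$ and $(\mu'_b+1,b)$. Add to $\mu$ the unique addable border strip joining these two boxes: it consists of the outer-rim boxes of $\mu$ between them. Call the result $\lambda$. Then recover $h$ as the internal hook of $\lambda$ at the box $(\mu'_b+1,\ \mu_a+1)$. One checks $\lambda_{\mu'_b+1}=b$ and $\lambda'_{\mu_a+1}=a$, which is the mirror of Step 1. These checks show that the two maps are mutually inverse.

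\textbf{Main obstacle.} The only real content is verifying carefully that the outer rim of $\mu$ between the two extremities of an external hook has exactly $\ell$ boxes and that adding it yields a partition. Equivalently, one must show that cutting $R_h$ gives precisely the equalities $\mu_{\lambda'_j}=j-1$ and $\mu'_{\lambda_i}=i-1$. I would do this in the Maya diagram (edge-sequence) language the paper already uses for Bessenrodt's theorem. There, removing a rim hook of length $\ell$ is moving a bead from position $x$ to $x-\ell$. An internal hook corresponds to a pair (bead at $x$, gap at $x-\ell$), and an external hook of $\mu$ to a pair (gap at $x$, bead at $x-\ell$). The bead move swaps these two configurations, fixing the positions $x$ and $x-\ell$. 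Those positions encode the contents of hand and foot, which makes both the bijectivity and the preservation of extremities immediate.
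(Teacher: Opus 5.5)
Your proposal is correct and uses the same hook $\to$ border strip $\to$ hook correspondence as the paper, run in the opposite direction: you remove the rim hook from the larger partition, while the paper adds the external strip to the smaller one. Your bead-swap argument on the Maya diagram supplies the justification the paper only asserts (existence and uniqueness of the strip, and that the two extremal cells are kept). Your observation that hand and foot exchange roles matches what happens in the paper's own example.
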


\subsection{Wall--crossing}
The bijection of  \Cref{thm: Bessen_original}, by taking the plethystic exponential of the corresponding generating series, can be equivalently expressed in the following form, see  \Cref{sec: notation partitions} for the definitions and notations.

 \begin{theorem}\label{thm:intro_wallcross} 
 Let $\lambda$ be a Young diagram. We have the identity
     \begin{equation*}
         \prod_{ \Box \in \mathcal{H}'(\lambda)} \frac{1}{1 - x^{a(\Box) + 1} y^{\ell(\Box)}} 
         =
         \prod_{\Box \in \mathcal{H}'(\varnothing)} \frac{1}{1 - x^{a(\Box) + 1}y^{\ell(\Box)}}
         \prod_{\Box \in \mathcal{H}(\lambda)} \frac{1}{1 - x^{a(\Box) + 1}y^{\ell(\Box)}}.
     \end{equation*}
\end{theorem}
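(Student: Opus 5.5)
The identity in \Cref{thm:intro_wallcross} is a formal consequence of \Cref{thm: Bessen_original}, so the plan is to deduce it directly rather than reprove anything combinatorial.

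The key observation is that each factor $\bigl(1 - x^{a(\Box)+1} y^{\ell(\Box)}\bigr)^{-1}$ depends on the hook $\Box$ only through its hook type, namely the pair $(a(\Box),\ell(\Box))$ of arm and leg lengths. For an external hook these are the arm and leg lengths determined by the boundary of $\lambda$, exactly as in the statement of \Cref{thm: Bessen_original}. Hence both sides are products indexed by multisets of hook types, and it suffices to show that the multiset of types of $\mathcal{H}'(\lambda)$ equals the multiset union of the types of $\mathcal{H}'(\varnothing)$ and $\mathcal{H}(\lambda)$.

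First I will check that both sides are well defined as formal power series in $\mathbb{Z}[[x,y]]$.
\begin{itemize}
\item Since $a(\Box)+1\geq 1$, every factor is $1+O(x)$.
\item For a fixed pair $(a,\ell)$, only finitely many hooks in each set have that type. For $\mathcal{H}'(\varnothing)$, the external hooks of the empty partition are the cells $(i,j)$ with $i,j\geq 1$, whose types are $(j-1,i-1)$ in some convention, so each type occurs exactly once. For $\mathcal{H}'(\lambda)$, only finitely many cells lie near the boundary, and far from it the type determines the cell.
\item The infinite products therefore converge $x$-adically and $y$-adically. They can be regrouped as $\prod_{(a,\ell)} (1-x^{a+1}y^\ell)^{-m(a,\ell)}$, where the multiplicities $m(a,\ell)$ are finite.
\end{itemize}

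Next I apply \Cref{thm: Bessen_original}, which gives a bijection $\mathcal{H}'(\lambda)\leftrightarrow \mathcal{H}'(\varnothing)\sqcup\mathcal{H}(\lambda)$ preserving hook types. It follows that $m_{\mathcal{H}'(\lambda)}(a,\ell)=m_{\mathcal{H}'(\varnothing)}(a,\ell)+m_{\mathcal{H}(\lambda)}(a,\ell)$ for every type. Regrouping the products by type then gives the identity.

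Equivalently, one can take the plethystic exponential of the equality of generating series $\sum_{\mathcal{H}'(\lambda)} x^{a+1}y^{\ell} = \sum_{\mathcal{H}'(\varnothing)} x^{a+1}y^\ell + \sum_{\mathcal{H}(\lambda)} x^{a+1}y^\ell$. This works because $\mathrm{Exp}$ is a homomorphism from addition to multiplication, and $\mathrm{Exp}(x^{a+1}y^\ell)=(1-x^{a+1}y^\ell)^{-1}$.

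The only delicate point is the convergence and finiteness of the multiplicities, in particular confirming that the external hooks have a uniform type convention so that the types match those used in \Cref{thm: Bessen_original}. Once that is fixed, the rest is bookkeeping.
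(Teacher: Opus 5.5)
Your proposal is correct and follows the paper's own route: the paper gets this identity by taking the plethystic exponential of the equality of generating series $\sum x^{a+1}y^{\ell}$ supplied by Bessenrodt's type-preserving bijection in \Cref{thm: Bessen_original}. Your convergence check is a harmless extra, with two small corrections: the relevant topology is the $(x,y)$-adic one, since infinitely many factors $(1-xy^{\ell})^{-1}$ share the same $x$-degree. Also, finiteness of type multiplicities on $\mathcal{H}'(\lambda)$ already follows from the bijection itself, because each type occurs exactly once in $\mathcal{H}'(\varnothing)$ and $\mathcal{H}(\lambda)$ is finite.
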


Specialising $x=y=q$ one obtains the following statement, originally derived by Sagan \cite[Thm.~2.1]{Sagan}  by means of the so-called \textit{jeu de taquin} algorithm.

\begin{corollary}\label{thm:sagan} Let $\lambda$ be a Young diagram. We have the identity
     \begin{equation*}
         \prod_{ \Box \in \mathcal{H}'(\lambda)} \frac{1}{1 - q^{h(\Box)}} 
         =
         \prod_{\Box \in \mathcal{H}'(\varnothing)} \frac{1}{1 - q^{h(\Box)}}
         \prod_{\Box \in \mathcal{H}(\lambda)} \frac{1}{1 - q^{h(\Box)}}.
     \end{equation*}
\end{corollary}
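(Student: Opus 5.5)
The corollary is a direct specialisation of \Cref{thm:intro_wallcross}, so I will derive it from that theorem, which I take as given since it is stated just above.

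First, set $x=y=q$ in the identity of \Cref{thm:intro_wallcross}. For every box $\Box$, internal or external, the monomial $x^{a(\Box)+1}y^{\ell(\Box)}$ becomes $q^{a(\Box)+\ell(\Box)+1}$. By definition of the hook length this is $q^{h(\Box)}$, with $h(\Box)=a(\Box)+\ell(\Box)+1$. This holds both for internal hooks in $\mathcal{H}(\lambda)$ and for external hooks in $\mathcal{H}'(\lambda)$ and $\mathcal{H}'(\varnothing)$, since in each case the arm and leg are read off the boundary of the relevant diagram.

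Second, I check that the specialisation is legitimate as an identity of formal power series. The identity of \Cref{thm:intro_wallcross} holds in $\mathbb{Z}[[x,y]]$. The substitution $x,y\mapsto q$ is a continuous ring homomorphism $\mathbb{Z}[[x,y]]\to\mathbb{Z}[[q]]$, because it sends the maximal ideal into $(q)$.

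The infinite products remain well defined after substitution. The sets $\mathcal{H}'(\lambda)$ and $\mathcal{H}'(\varnothing)$ are infinite, but for each $n$ only finitely many boxes have $h(\Box)=n$. Indeed, $a(\Box)$ and $\ell(\Box)$ are nonnegative with $a+\ell+1=n$. By Bessenrodt's type-preserving bijection (\Cref{thm: Bessen_original}), the number of external hooks of a fixed type $(a,\ell)$ equals that for $\varnothing$ plus a finite number, and for $\varnothing$ it is exactly one. So each factor $1/(1-q^{n})$ appears only finitely often, and the products converge $q$-adically. They are therefore the images of the two-variable products under the substitution.

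Finally, applying the homomorphism to both sides gives the stated identity. There is no real obstacle. The only point needing care is the well-definedness and convergence check in the second step, which reduces to the finiteness of hooks of each fixed hook length.
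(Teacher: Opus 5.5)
Your proposal is correct and takes the same route as the paper, which obtains this corollary by specialising $x=y=q$ in \Cref{thm:intro_wallcross} and using $a(\Box)+\ell(\Box)+1=h(\Box)$. Your extra convergence check is rigour the paper leaves implicit: the substitution is a continuous homomorphism $\mathbb{Z}[[x,y]]\to\mathbb{Z}[[q]]$, and only finitely many hooks have each hook length.
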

In other words, Bessenrodt's theorem \cite[Thm.~3.2]{Bessenrodt} specialises to Sagan's theorem \cite[Thm.~2.1]{Sagan} when forgetting the decomposition of the hook length $h(\Box)$ into its arm and leg length, $a(\Box) + \ell(\Box) + 1 = h(\Box)$. Notice that the second product can equivalently be reformulated via the MacMahon generating series. One could also wonder whether  further refinements of \Cref{thm:intro_wallcross}  hold true. In particular, the refinement\footnote{
This is content of Conjecture 2.5 of  the Arxiv v1  version of \cite{IlMona2025}.} in which the monomials in $x$ and $y$ are replaced by the variables $q_{\Box}$ keeping track of the coordinates of each individual box $\Box$ involved in the hooks does not hold true, as one can see e.g. by considering $\lambda=(1)$.

Taking the plethystic exponential of the associated generating series of \Cref{theorem:hooks:strips:intro} yields the following corollary.

\begin{corollary}[\Cref{cor:hook:strip:refined}]
\label{cor:hook:strip:refined:intro}
Let $d \geq \ell > 0$. We have the identity
     \begin{equation*}
         \prod_{(\lambda, \Box) \in S_{d,\ell}}\frac{1}{(1 - x^{a_{\lambda}(\Box) + 1} y^{\ell_{\lambda}(\Box)})}
         =
         \prod_{(\lambda', \Box') \in S'_{d-\ell,\ell}}\frac{1}{(1 - x^{a_{\lambda}(\Box') + 1} y^{\ell_{\lambda}(\Box')})}.
     \end{equation*}
\end{corollary}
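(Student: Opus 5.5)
The identity follows directly from \Cref{theorem:hooks:strips:intro}, in the same way that \Cref{thm:intro_wallcross} follows from \Cref{thm: Bessen_original}. The idea is that each factor on either side depends only on the hook type $(a,\ell)$ of the relevant hook, so a type-preserving bijection between the index sets matches the factors one by one.

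First, fix $d \geq \ell > 0$. Let $\Phi \colon S_{d,\ell} \to S'_{d-\ell,\ell}$ be the bijection of \Cref{theorem:hooks:strips:intro}, written $(\lambda,\Box) \mapsto (\lambda',\Box')$. By that theorem, $\Phi$ preserves the positions of the hand and foot of each hook. For an internal hook of $\lambda$ and an external hook of $\lambda'$ alike, the arm length is the horizontal distance from the corner to the hand. The leg length is the vertical distance from the corner to the foot. The corner itself is determined by the hand and foot: it sits in the column of the foot and the row of the hand. Hence preservation of hands and feet already forces
\[
a_\lambda(\Box)=a_{\lambda'}(\Box') \quad\text{and}\quad \ell_\lambda(\Box)=\ell_{\lambda'}(\Box').
\]
This is the ``hook type'' statement in the theorem, and I would simply invoke it.

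Second, both index sets are finite, since $\mathcal{P}_d$ and $\mathcal{P}_{d-\ell}$ are finite and each partition has finitely many hooks of length $\ell$. Each factor
\[
\bigl(1-x^{a+1}y^{\ell}\bigr)^{-1}
\]
is a function only of the pair $(a,\ell)$. Reindexing the right-hand product along $\Phi$ therefore sends each factor on the left to an identical factor on the right. In the notation of the statement, the exponents on the right are to be read as $a_{\lambda'}(\Box')$ and $\ell_{\lambda'}(\Box')$. Hence the two products are equal.

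Equivalently, the bijection gives an equality of the finite multisets of monomials $\{x^{a+1}y^{\ell}\}$ indexed by the two sets. Applying the plethystic exponential to their sums gives the displayed products.

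There is no real obstacle here, since all the combinatorial content lies in \Cref{theorem:hooks:strips:intro}. The only care needed is to check that the arm and leg conventions for external hooks match those for internal hooks, so that equal hand and foot positions really give equal $(a,\ell)$.
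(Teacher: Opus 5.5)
Your proposal is correct and follows the paper's route: the corollary is deduced immediately from the hook-type preservation in \Cref{theorem:hooks:strips}, after which the two finite products agree factor by factor. One small precision: under that bijection the hand of the external hook (its bottom-left endpoint) becomes the foot of the internal hook and vice versa, so the two corners are different cells; since arm and leg lengths are just the horizontal and vertical separations of the two endpoint cells, your conclusion $a_\lambda(\Box)=a_{\lambda'}(\Box')$, $\ell_\lambda(\Box)=\ell_{\lambda'}(\Box')$ is unaffected.
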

Finally, by combining \Cref{thm:intro_wallcross} and \Cref{cor:hook:strip:refined:intro} we obtain we following  result.
\begin{prop}[\Cref{prop: ultimate wall refined}]
\label{prop:hook:strip:refined:intro}
    We have the identity
    \begin{equation*}
        \prod_{\substack{(\lambda, \Box) \in S_{d+\ell,\ell} \\ \ell >0 }}\frac{1}{1 - x^{a_{\lambda}(\Box) + 1} y^{\ell_{\lambda}(\Box)}}
        =
        \left(
        \prod_{\Box'' \in \mathcal{H}'(\varnothing)} \frac{1}{1 - x^{a_{\varnothing}(\Box'') + 1} y^{\ell_{\varnothing}(\Box'')}}
        \right)^{|\mathcal{P}_{d}|}
        \!\!\!\!\!\!
        \prod_{\substack{(\lambda, \Box') \in S_{d,\ell} \\ \ell >0 }}\frac{1}{1 - x^{a_{\lambda}(\Box') + 1} y^{\ell_{\lambda}(\Box')}}.
    \end{equation*}
\end{prop}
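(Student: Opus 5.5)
The plan is to read the statement at fixed $d$, with $\ell$ ranging over all positive integers on both sides, and to chain the two earlier results. On the left-hand side we regroup the product by $\ell$. For each $\ell>0$ we apply \Cref{cor:hook:strip:refined:intro} with $d$ replaced by $d+\ell$; this is allowed because $d+\ell\ge \ell>0$. It replaces the factor indexed by $S_{d+\ell,\ell}$ with the factor indexed by $S'_{d,\ell}$, and hook types are preserved. Taking the product over all $\ell>0$, the left-hand side becomes
\begin{equation*}
\prod_{\ell>0}\ \prod_{(\lambda,\Box')\in S'_{d,\ell}} \frac{1}{1-x^{a_\lambda(\Box')+1}y^{\ell_\lambda(\Box')}}
=\prod_{\lambda\in\mathcal{P}_d}\ \prod_{\Box'\in\mathcal{H}'(\lambda)}\frac{1}{1-x^{a_\lambda(\Box')+1}y^{\ell_\lambda(\Box')}} .
\end{equation*}
The equality holds because the pairs $(\lambda,\Box')$ with $\lambda\in\mathcal{P}_d$ and $\Box'$ an external hook of $\lambda$ of some positive length are exactly all the external hooks of all $\lambda\in\mathcal{P}_d$. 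Every hook length is at least $1$, so no hook is missed.

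Next, for each fixed $\lambda\in\mathcal{P}_d$ we apply \Cref{thm:intro_wallcross}. It factors the inner product into the $\lambda$-independent term $\prod_{\Box''\in\mathcal{H}'(\varnothing)}(1-x^{a+1}y^{\ell})^{-1}$ times $\prod_{\Box\in\mathcal{H}(\lambda)}(1-x^{a+1}y^{\ell})^{-1}$. Multiplying over the $|\mathcal{P}_d|$ partitions, the first factor appears once for each of them and gives the stated power $|\mathcal{P}_d|$. The remaining factor is the product over all internal hooks of all $\lambda\in\mathcal{P}_d$. Regrouped by hook length $\ell>0$, this is exactly the product over $(\lambda,\Box')\in S_{d,\ell}$. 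Here $S_{d,\ell}$ is empty for $\ell>d$, so extending the range to all $\ell>0$ changes nothing, even though the corollary was stated only for $d\ge\ell$.

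The main point needing care is that these are infinite products, because each $\mathcal{H}'(\lambda)$ is infinite. We therefore justify the manipulations in $\mathbb{Z}[[x,y]]$. For a fixed monomial $x^ay^b$, only finitely many factors contribute, namely those hooks with arm $<a+1$ and leg $\le b$, including the cases with zero leg. Since the sets of factors are matched by hook-type-preserving bijections, the equalities can be checked at the level of the multisets of exponents $(a+1,\ell)$, where they hold by the bijective statements \Cref{theorem:hooks:strips:intro} and \Cref{thm: Bessen_original}. One must also check that the factors with $\ell=0$, i.e.\ $y^0$, give convergent products. They do: each such factor is $1/(1-x^{a+1})$ with $a+1\ge1$, and for each fixed $x$-exponent only finitely many of them are nontrivial. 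With this, the regrouping and reordering are legitimate and the identity follows.
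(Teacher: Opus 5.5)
Your proposal is correct and follows the paper's proof essentially step for step. You apply \Cref{cor:hook:strip:refined} with $d$ replaced by $d+\ell$ for each $\ell>0$, then regroup the product over the sets $S'_{d,\ell}$ as $\prod_{\lambda\in\mathcal{P}_d}\prod_{\Box'\in\mathcal{H}'(\lambda)}$, apply Bessenrodt's identity to each $\lambda$, and finally regroup the internal hooks by length into the sets $S_{d,\ell}$. Your extra check that the infinite products converge in $\mathbb{Z}[[x,y]]$ is rigour the paper leaves implicit; strictly, the bound there should read arm $\le a-1$ rather than arm $<a+1$, but the overcount is harmless.
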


\subsection{Relations to the DT / PT correspondence}
The enumeration problem of reverse and skew plane partitions naturally arises in the context of Donaldson-Thomas (DT) and Pandharipande-Thomas (PT) theory of local curves \cite{double-nested-1, Mon_double_nested, IlMona2025} and of surfaces \cite{FGLR}. 
In fact, as the third author showed in \cite{Mon_double_nested, IlMona2025}, the generating series of topological Euler characteristics of Hilbert schemes (\emph{the DT side}) and the moduli spaces of stable pairs (\emph{the PT side}) of a local curve are controlled by the generating series of skew and reverse plane partitions, respectively. From this perspective, the DT/PT \emph{wallcrossing formula} \cite[Thm.~1.9]{IlMona2025} is the geometric counterpart of the combinatorial bijection originally proved by Sagan \cite{Sagan} (see also \cite{Comb_DT/PT}). Under this dictionary, \Cref{thm: Bessen_original} is the \textit{refined} combinatorial counterpart of the \emph{refined} DT/PT wallcrossing formula proved in \cite{IlMona2025}, defined by \emph{virtual integration} on the corresponding moduli space. See also the recent works \cite{long-ref-wall, ref_wall} for the refined DT/PT correspondences for general Calabi-Yau threefolds, and 
\cite{Arb_K-theo_surface, CKM_K, RefTopVertex} for their relation with  the \emph{refined topological vertex}. 


\subsection*{Structure of the paper}
In \Cref{sec:PP} we recall the necessary background on reverse and skew plane partitions: including hook, arm and leg lengths, and their generating functions. \Cref{sec:proof:main} proves the main theorem and constitutes the combinatorial core of the paper. \Cref{sec:non:sottili} contains the statement and the combinatorial proof of the explicit bijection between internal and external hooks of different partitions. Finally, in \Cref{sec:Fock} we briefly discuss the geometric the Fock space operator formalism in which the previous result can be rephrased.

\subsection*{Acknowledgements}
The authors thank R.~Kramer for useful remarks. D.~A. and D.~L. are supported by the University of Trieste and by the INFN within the project MMNLP (APINE). S.~M. is supported by the HORIZON-MSCA-2024-PF-01 Project 101203281 ``Moduli Spaces of Sheaves: Geometry and Invariants'', funded by the Research and Innovation framework programme Horizon Europe. The authors are supported by the INdAM group GNSAGA.

\vspace{1cm}
\section{Reverse and skew plane partitions}
\label{sec:PP}

\subsection{Young diagrams}\label{sec: notation partitions}

A \emph{partition} $\lambda$ of $d\in \BZ_{\geq 0}$ is a finite sequence of positive integers 
\[
\lambda=(\lambda_1\geq \lambda_2\geq \lambda_3\geq \dots),
\]
where 
\[
|\lambda|=\sum_{i}\lambda_i=d.
\]
A partition $\lambda$ can be equivalently pictorially described by its associated \emph{Young diagram}, which is the collection of $d$ \emph{boxes} $\Box$, or \emph{cells}, in $\BZ^2$ located at $(i,j)$ where $0\leq j< \lambda_{i+1}$. We denote by $\mathcal{P}_d$ the collection of Young diagrams of size $d$.

\begin{definition}\label{def: spp}
Let $\lambda$ be a Young diagram.
\begin{itemize}
    \item A \emph{skew plane partition} of shape $\BZ_{\geq 0}^2\setminus \lambda$, or simply of shape $\lambda$, is a collection of  nonnegative integers $\bf{n}=(n_{\Box})_{\Box\in \BZ_{\geq 0}^2\setminus\lambda}$ non-increasing along rows and columns, with only finitely many non-zero entries. In other words, we have  $n_{\Box}\leq n_{\Box'} $ whenever $\Box\geq \Box'$\footnote{This means that if $\Box=(i,j), \Box'=(i',j')$, then $i\geq i'$ and $j\geq j'$.}.
    \item A \emph{reversed plane partition} of shape $\lambda$  is a collection of  nonnegative integers $\mathbf{m}=(m_{\Box})_{\Box\in \lambda}$ non-decreasing along rows and columns. In other words, we have  $m_{\Box}\leq m_{\Box'} $ if $\Box\leq \Box'$.
\end{itemize}
\end{definition}

The \emph{size} of a skew (resp.~reversed) plane partition $\bf{n}$ (resp. $\bf{m}$) is
\begin{align*}
    |\mathbf{n}|=\sum_{\Box\in \BZ^2_{\geq 0}\setminus\lambda}n_{\Box}
     \qquad 
    (\text{resp. }|\mathbf{m}|=\sum_{\Box\in \lambda}m_{\Box}).
\end{align*}
\begin{figure}[!ht]
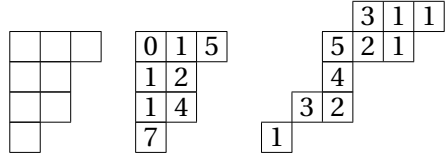

    \centering
   \yng(3,2,2,1) \quad \young(015,12,14,7) \quad \young(:::311,::521,::4,:32,1)     
    \caption{Respectively from the left, a  Young diagram of size 8, a reversed plane partition of size 21 and a skew plane partition of size 23.}
    \label{fig:my_label}
\end{figure}

\subsubsection{Hooks}
 Let $\lambda$ be a Young diagram. We define the   \emph{(internal) hook} at $\Box=(i,j)\in \lambda$ with respect to $\lambda$ to be the collection of boxes  in $\lambda$ below or at the right of $\Box$, namely
\begin{align*}
  H_\lambda(\Box)=\Set{(l,k)\in \lambda| l=i,\, k\geq j \mbox{ or } l\geq i,\, k=j}.
\end{align*}
Similarly, we define the \emph{(external) hook} at  $\Box=(i,j)\in \BZ^2_{\geq 0}\setminus \lambda$ with respect to $\lambda$ to be the collection of boxes in $  \BZ^2_{\geq 0}\setminus \lambda$ which are above or on the left of $\Box$, namely
\begin{align*}
  H_\lambda(\Box)=\Set{(l,k)\in \BZ^2_{\geq 0}\setminus \lambda| l=i,\, k\leq j \mbox{ or } l\leq i,\, k=j}.
\end{align*}

With abuse of notation, we sometimes identify the hook $H_\lambda(\Box)$ simply by $\Box$ whenever it is clear from the context. We denote by $\mathcal{H}(\lambda)$ (resp.~$\mathcal{H}'(\lambda)$ ) the set of internal (resp.~external) hooks of $\lambda$.

Let $\Box=(i,j)\in \lambda$. We define the \emph{arm length} $a(\Box)$ and the   \emph{leg length} $\ell(\Box)$ with respect to $\lambda$ by
\begin{align*}
    a_{\lambda}(\Box)&=|\set{(i,j')\in \lambda| j<j'}|,\\
    \ell_{\lambda}(\Box)&=|\set{(i',j)\in \lambda|i<i'}|.
\end{align*}
Similarly for $\Box \notin \lambda$ we define the \emph{arm length} $a(\Box)$ and the   \emph{leg length} $\ell(\Box)$ with respect to $\lambda$ by
\begin{align*}
    a_{\lambda}(\Box)&=|\set{(i,j')\notin \lambda| j'< j }|,\\
    \ell_{\lambda}(\Box)&=|\set{(i',j)\notin \lambda| i' < i }|.
\end{align*}
We define the \emph{hook length} $h(\Box)$ with respect to $\lambda$ by
\[
h_{\lambda}(\Box) = a_{\lambda}(\Box)+\ell_{\lambda}(\Box)+1,
\]
and the \emph{hook type} of a hook corresponding to $\Box$ as the pair $(a_{\lambda(\Box)}, \ell_{\lambda(\Box)})$. We will often omit the subscript $\lambda$ whenever it is clear from the context. 

Finally, we define the \emph{content} (resp.~\emph{cocontent}) of a cell $\Box = (i,j)$ to be $c(\Box)=j-i$ (resp.~${\text{coc}}(\Box)=j+i$), and the content set of a hook $h$ as the set $\{j-i \,|\, (i,j) \in h \}$. Moreover, the \textit{hand} of a hook is the last cell at the end of its arm, and the \textit{foot} of a hook is the last cell at the end of its leg.
\begin{figure}[!ht]
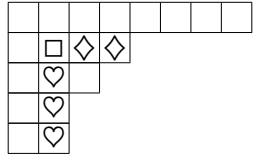

   \centering
   \young(\,\,\,\,\,\,\,\,,\,\Box \diamondsuit\diamondsuit,\,\heartsuit\,,\,\heartsuit,\,\heartsuit)
    \caption{Let $\lambda = (8,4,3,2,2)$. The number of $\diamondsuit$ on the right (resp. of $\heartsuit$ below) of $\Box$ is the arm (resp.~leg) length of the hook defined by $\Box$: $a_{\lambda}(\Box) = 2,\, \ell_{\lambda}(\Box) = 3,\, h_{\lambda}(\Box) = 6$.}
   \label{fig:arm leg}
\end{figure}

\subsection{Generating series}
Let  $\lambda$ be a Young diagram and $\bf{n}$  be  either a  reverse plane partition, or a skew plane partition of shape $\lambda$. Let  $(q_\Box)_{\Box}$  be an infinite set of variables, indexed by $\Box\in \BZ^{2}_{\geq 0} $, and set the multivariable
\[
{\bf{q}}^{\bf{n}}=\prod_{\Box \in \BZ^{2}_{\geq 0} }q_\Box^{n_{\Box}}.
\]
   Let $\dots q_{-1},q_0,q_1, q_2, \dots, $ be an infinite set of variables,  and for a box $\Box\in \BZ^2_{\geq 0}$,  set the multivariable
   \begin{align}\label{eqn:multvar}
\mathbf{p}_{\Box, \lambda}=\prod_{(i,j)\in H_{\lambda}(\Box)}q_{j-i}.       
   \end{align}

\begin{prop}\label{prop: gen series unrefined multiv}
   Let $\lambda$ be a Young diagram, then we have the equalities of generating series:
    \begin{align*}
        \sum_{\bf{n} \in \rpp(\lambda)} {\bfq}^{\bf{n}} \Big{|}_{q_{\Box}=q_{j-i}} &= \prod_{\Box \in \lambda} \frac{1}{1 - \mathbf{p}_{\Box, \lambda}},
             \\
         \sum_{\bf{n} \in \spp(\lambda)} {\bf{q}}^{\bf{n}} \Big{|}_{q_{\Box}=q_{j-i}} &= \prod_{\Box \notin \lambda} \frac{1}{1 - \mathbf{p}_{\Box, \lambda}}.
    \end{align*}
\end{prop}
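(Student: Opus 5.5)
The plan is to use the Hillman--Grassl correspondence for reverse plane partitions and Gansner's extension, together with its skew analogue for the second identity. After specialising $q_\Box\mapsto q_{j-i}$, each product $\prod_{\Box}(1-\mathbf p_{\Box,\lambda})^{-1}$ is the generating series of finitely supported multisets of hooks: every hook $H_\lambda(\Box)$ is counted with a multiplicity $k_\Box\ge 0$ and carries weight $\mathbf p_{\Box,\lambda}^{k_\Box}$, which is the product of $q_c$ over the contents $c$ in the hook. So it suffices to construct weight-preserving bijections
\[
\rpp(\lambda)\;\longleftrightarrow\;\{(k_\Box)_{\Box\in\lambda}\},\qquad \spp(\lambda)\;\longleftrightarrow\;\{(k_\Box)_{\Box\notin\lambda}\},
\]
with only finitely many $k_\Box\neq 0$. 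The weight to be preserved is the diagonal content statistic: for every $c$, the sum of the entries $n_\Box$ on the diagonal $j-i=c$ must equal the number of hook cells of content $c$, counted with multiplicity.

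For the reverse case I take the Hillman--Grassl algorithm as it stands. Given $\mathbf m$, find the leftmost column that contains a nonzero entry, take its bottommost such cell, and trace a lattice path going north or east through nonzero cells, ending in some row. Subtract $1$ along the path, and record one copy of the hook whose foot column is the starting column and whose hand row is the ending row.

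The main point is that the path enters each diagonal the hook meets exactly once, and meets no other diagonal. That is exactly why the weight equals $\mathbf p_{\Box,\lambda}$. The path runs from the foot to the hand, and these are also the two extreme contents of $H_\lambda(\Box)$. Moving north increases the content by one and moving east also increases it by one, so the path uses every content in that interval exactly once. This is also the content set of the hook, and it has size $h(\Box)$.

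Invertibility is Gansner's/Hillman--Grassl's standard argument: one reinserts hooks in a prescribed order, and the monotonicity conditions are preserved. I would cite \cite{Hillman-Grassl, Gansner_reversed_plane_partitions} rather than redo this.

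For the skew case, the shape $\BZ^2_{\ge0}\setminus\lambda$ is infinite, but each $\mathbf n$ has finite support $S$. The inequalities force $S$ to be an order ideal in $\BZ^2_{\ge0}\setminus\lambda$ with respect to $\le$, that is, a finite skew shape $\mu/\lambda$. I would apply the same Hillman--Grassl procedure to the finite skew shape $\mu/\lambda$, rotated by $180^\circ$. An external hook $H_\lambda(\Box)$ has its arm going left and its leg going up until it hits $\lambda$ or the axes, so after rotation it becomes an ordinary skew hook. Again its weight is the product of $q$ over the interval of contents between its foot and hand.

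The hardest step is independence of the auxiliary $\mu$. I need the hook attached to $\Box$ to depend only on $\lambda$, and the bijection to be compatible when $\mu$ is enlarged. Sagan's and Gansner's skew versions give this, because the boundary of the hooks is the boundary of $\lambda$, not of $\mu$. If the direct skew Hillman--Grassl argument turned out to be messy, my first fallback would be a Gansner-style transfer-matrix or vertex-operator computation along the boundary of $\lambda$. Each external hook corresponds to an inversion pair (up-step before right-step) in the infinite boundary word of $\lambda$, and each such pair contributes the factor $(1-q_{c_1}\cdots q_{c_2})^{-1}$ by the commutation $\Gamma_+\Gamma_-=(1-\cdot)^{-1}\Gamma_-\Gamma_+$.
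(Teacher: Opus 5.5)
Your treatment of the reverse case is the same as the paper's, which simply cites Gansner's Theorem~5.1. Your remark that the Hillman--Grassl path runs from foot to hand through each content of the hook exactly once is exactly why the specialisation $q_\Box\mapsto q_{j-i}$ works.

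The gap is in the skew case, at the step you yourself flag. Running Hillman--Grassl on the finite skew shape $\mu/\lambda$ (rotated by $180^\circ$) cannot give a bijection with multisets of hooks at cells of $\mu/\lambda$. Reverse plane partitions of a genuinely skew shape are not enumerated by the product over its hooks, and the hooks attached to $\mathbf n$ need not lie in its support. Concretely, take $\lambda=\varnothing$ and $n_{(0,0)}=n_{(0,1)}=n_{(1,0)}=1$, which has weight $q_{-1}q_0q_1$. The external hook of $\varnothing$ at $(i,j)$ has content set $\{-i,\dots,j\}$, so the only multiset of hooks of this weight is the single hook at $(1,1)$. That cell lies outside the support $\mu=(2,1)$. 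Equivalently, with all $q_c=q$, there are four skew plane partitions of size $3$ supported in $\mu=(2,1)$, but only three multisets of external hooks at cells of $\mu$ of total length $3$. So taking $\mu$ to be the support, or an arbitrary shape containing it, fails. The real issue is the shape of $\mu$, not independence of $\mu$, and citing compatibility under enlargement does not repair it.

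The missing idea, which is the paper's proof, is to truncate by a square $[0,N]\times[0,N]\supset\lambda$ (any rectangle would do) rather than by an arbitrary $\mu$. Then $\lambda^D=\{(N-i,N-j) : (i,j)\in[0,N]\times[0,N]\setminus\lambda\}$ is an honest Young diagram. Skew plane partitions of shape $\lambda$ supported in the square are exactly reverse plane partitions of $\lambda^D$. External hooks of $\lambda$ at cells of the square are exactly internal hooks of $\lambda^D$, because the coordinate axes become the outer boundary of the square.

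Hence the first identity applies verbatim, after relabelling $q_c\mapsto q_{-c}$ since the rotation negates contents. One then lets $N\to\infty$ coefficientwise. No compatibility of bijections under enlargement is needed, only stabilisation of coefficients. This holds because distinct external hooks of $\lambda$ have distinct content intervals $[\lambda_{i+1}-i,\, j-\lambda'_{j+1}]$, so each monomial involves only finitely many hooks.

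Your vertex-operator fallback is a genuinely different and valid route: the Okounkov--Reshetikhin transfer matrix, with back wall given by the boundary of $\lambda$ and diagonal weights $q_c$. It proves the skew identity directly without reducing to Gansner. As written, though, it is only an outline.
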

\begin{proof}
    The first identity was proved by Gansner {\cite[Thm.~5.1]{Gansner_reversed_plane_partitions}}. The second identity follows dually by the first one. In fact,    let $N>0$ be an integer large enough so that $\lambda\subset [0, N]\times [0, N]$. Consider the subcollection $\spp^N(\lambda)\subset \spp(\lambda)$ consisting of skew plane partitions whose boxes with non-zero entries are contained in $[0, N]\times [0, N] $. Consider the Young diagram 
\[
\lambda^D=\set{(N-i, N-j)\in \BZ^2_{\geq 0}| (i,j)\in [0, N]\times [0, N]\setminus \lambda}\subset \BZ^2_{\geq 0}.
\]
Clearly, to each skew plane partition $\bf{n}$ of shape $\lambda$ in $\spp^N(\lambda)$ uniquely corresponds a reverse plane partition $\bf{n}^D$ of shape $\lambda^D$. 

To each box $\Box=(i,j)\notin \lambda$ corresponds a box $\Box^D=(N-i, N-j)\in \lambda^D$. In particular,  we have that 
\[
\mathbf{p}_{\Box, \lambda}=\mathbf{p}_{\Box^D, \lambda^D}.
\]
Therefore, by the first identity we have that 
    \begin{align*}
         \sum_{{\bf{n}} \in \spp^N(\lambda)} {\bf{q}}^{\bf{n}} &= \sum_{{\bf{n}}^D \in \rpp(\lambda^D)} {\bf{q}}^{{\bf{n}}^D} \\
        &=\prod_{\Box^D \in \lambda^D} \frac{1}{1 - \mathbf{p}_{\Box^D, \lambda^D}}\\
         &=\prod_{\Box\in [0,N]\times [0, N] \setminus \lambda} \frac{1}{1 - \mathbf{p}_{\Box, \lambda}}.
    \end{align*}
    Taking the limit for $N\to \infty$ concludes the proof.
\end{proof}
\begin{remark}
    Setting $q_k=q$, the first series  of \Cref{prop: gen series unrefined multiv} admits an arm and leg lengths refined formula
   \begin{align*}
             \sum_{\bf{n} \in \rpp(\lambda)} q^{|\bf{n}|}t^{f(\bf{n})} 
             &=
             \prod_{\Box \in \lambda} \frac{1}{1 - q^{ h(\Box)}t^{\ell(\Box) - a(\Box) - 1}}
             =
             \prod_{\Box \in \lambda} \frac{1}{1 -x^{a(\Box)+1}y^{\ell(\Box)}},
    \end{align*}
    where we applied the change of variables
    \begin{align*}
        \begin{cases}
            x=qt^{-1}
            \\
            y=qt
        \end{cases},
    \end{align*}
    while $f(\bf{n})$ is a suitable statistical weight for reverse plane partitions, originally computed in the context of the \emph{refined topological vertex} \cite{RefTopVertex}, see also \cite[Sec.~4.3.3]{Arb_K-theo_surface}. 

    The statistical weight appearing in the generating series above can, alternatively, be restated in terms of cocontent of hooks hands and feet in the following way
    \begin{align*}
             \sum_{\bf{n} \in \rpp(\lambda)} q^{|\bf{n}|}t^{f(\bf{n})} 
             &= 
             \sum_{\substack{\Box \in \lambda \\ n({\Box}) \geq 0}} q^{\sum_{\Box \in \lambda } n({\Box}) (c(\text{hand($\Box$)}) - c(\text{foot($\Box$)}) + 1)}
             t^{\sum_{\Box \in \lambda } n({\Box}) (\text{coc}(\text{hand($\Box$)}) - \text{coc}(\text{foot($\Box$)}) - 1)},
    \end{align*}
    where the summation sets are in bijection by Gansner correspondence in \Cref{prop: gen series unrefined multiv} of reverse plane partitions as stacks of strips, thus making 
    \begin{equation*}
        f({\bf{n}}) = \sum_{\Box \in \lambda } n({\Box}) (\text{coc}(\text{hand($\Box$)}) - \text{coc}(\text{foot($\Box$)}) - 1)
    \end{equation*} 
    explicit. Using the same ideas as in the proof of the second statement of the same proposition, one can derive an analogous formula for skew plane partitions. The \emph{unrefined limit}, i.e. $t=1$, recovers the formulas in \Cref{prop: gen series unrefined multiv}.
\end{remark}

\vspace{1cm}
\section{Proof of the main theorem}
\label{sec:proof:main}

\subsection{Thin partitions}
Let $\lambda$ be a partition. Consider the subdivision $S_{\lambda}$ of $\mathbb{R}^2_{\geq 0}$ in (possibly semi-infinite) rectangles induced by $\lambda$ by prolonging all horizontal and vertical segment of $\lambda$ to infinite lines, see \Cref{fig:Theta}.

    We call \textit{$\lambda$-tectonic plates} the (possibly semi-infinite) rectangles of this subdivision that lie outside of $\lambda$, and \emph{ $\lambda$-tiles} the (finite) rectangles that lie inside $\lambda$. If we set $\lambda = (1^{m_1}, 2^{m_2}, \dots)$, where $m_i$ denotes the multiplicity of the part $i$,  let $K$ be the number of distinct parts in $\lambda$,  then 
$\BR^2_{\geq 0}$ is divided in exactly $(K+1)^2$ rectangles, $\binom{K+1}{2}$ $\lambda$-tiles and $\binom{K}{2} + 2K + 1$  $\lambda$-tectonic plates, out of which $\binom{K}{2}$ are finite and $2K+1$ are semi-infinite.

    Denote by
    \[
    x_1, \dots, x_{K}, \qquad \qquad y_1, \dots, y_{K},
    \]
    the horizontal (resp.~the vertical) lengths of the $\lambda$-tiles starting from the origin.       
    We define the \emph{arm length} $a_{\lambda}(\bullet)$ (resp.~\emph{leg length} $\ell_{\lambda}(\bullet)$) of a $\lambda$-tile as the sum of the horizontal (resp.~ vertical) lengths $x_i$ (resp.~ $y_j$) of $\lambda$-tiles to the external boundary of $\lambda$. 
    
    Analogously, we define the \emph{arm length} $a'_{\lambda}(\bullet)$ (resp.~\emph{leg length} $\ell'_{\lambda}(\bullet)$) of a $\lambda$-tectonic plate as the sum of the horizontal (resp.~ vertical) lengths $x_i$ (resp.~$y_j$) of $\lambda$-tectonic plates to the external boundary of $\lambda$. Arm and leg lengths of any rectangle in $\mathbb{R}^2_{\geq 0}$ are defined analogously.
    Clearly, $\lambda$-tectonic plates and $\lambda$-tiles are uniquely determined inside $\BR^2_{\geq 0}$ by their horizontal and vertical lengths and by their arm and leg lengths. Finally, we label the $\lambda$-plates in  $R\in S_\lambda$ with coordinates $(i,j)$, as in \Cref{fig:Theta}, for $i,j=1, \dots, K+1$.

    The following definition will be key for the main statement of the paper.
    \begin{definition}\label{def:thin}
    We say that a partition $\lambda$ is \emph{thin} if for all $n=1, \dots, K,$ the following inequalities hold:
    \begin{align*}
          x_1 + \dots+x_n &\leq x_{n+1},\\
          y_1 + \dots+y_n &\leq y_{n+1}.
    \end{align*}
    \end{definition}

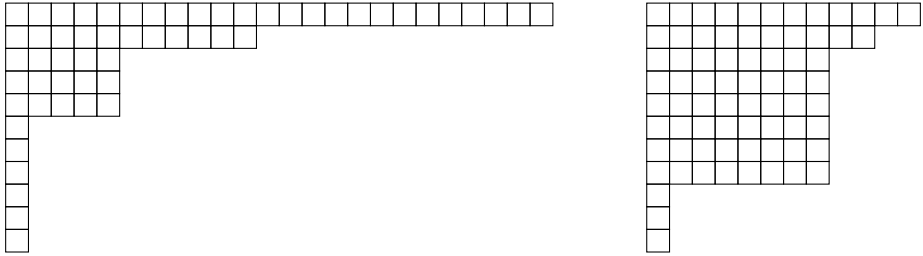
\begin{figure}[h]
    \centering
\begin{tikzpicture}[scale=0.3]

\begin{scope}
\foreach \row/\len in {
0/24,
1/11,
2/5,3/5,4/5,
5/1,6/1,7/1,8/1,9/1,10/1
} {
    \foreach \col in {0,...,\numexpr\len-1} {
        \draw (\col,-\row) rectangle ++(1,1);
    }
}
\end{scope}

\begin{scope}[xshift=800]  
\foreach \row/\len in {
0/12,
1/10,
2/8,3/8,4/8,5/8,6/8,7/8,
8/1,9/1,10/1
} {
    \foreach \col in {0,...,\numexpr\len-1} {
        \draw (\col,-\row) rectangle ++(1,1);
    }
}
\end{scope}

\end{tikzpicture}
    \caption{
    The partition $\lambda = (24,11,5^3,1^6)$ on the left is thin, whereas the partition $\mu = (12,10,8^6,1^3)$ on the right is not.}
    \label{fig:placeholder}
\end{figure}

\subsection{Tectonic movement}
  Let $\lambda$ be partition with Young diagram\footnote{Here we denote by $Y_{\lambda}\subset \BR^2_{\geq 0}$ the "real points" of the Young diagram.} $Y_{\lambda}\subset \BR^2_{\geq 0}$. We define the \emph{tectonic movement} $\Theta_\lambda$ as follows
  \begin{equation}\label{eq:Theta}
    \begin{tikzcd}[row sep =tiny]
   \Theta_{\lambda}: \; \BR^{2}_{\geq 0}\setminus Y_{\lambda}  \arrow[r, " \Theta_\lambda"] &  \BR^2_{\geq 0} \\
    (a,b)\arrow[r,mapsto] &\left(a-\sum_{k=1}^{K+1-j}y_k,b-\sum_{k=1}^{K+1-i}x_k\right),
\end{tikzcd}
\end{equation}
  where $(a,b)\in R$ and $R$ is a $\lambda$-tectonic plate with  coordinates $(i,j)$. 

\begin{remark}
\label{rmk:Theta}
    Pictorially, a $\lambda$-tectonic plate $T$ gets shifted north-west by the tectonic movement $\Theta_{\lambda}$ in the following way: $T$ is shifted north by the height of $\lambda$ north of $T$, and in the same way $T$ is shifted west by the width of $\lambda$ west of $T$. For a graphical example see \Cref{fig:Theta}.

    The geometric motivation behind the definition of the tectonic movement is that $\Theta_\lambda$ is the only map such that a cell representing an external hook to $\lambda$ gets mapped to a cell representing an external hook to the empty partition, of the same hook type.
\end{remark}
  
\begin{figure}[!ht]
    \centering
    \begin{tikzpicture}[scale =0.3]
    \fill[pastelgreen] (0,0) rectangle (15,-6);
    \fill[pastelgreen] (0,0) rectangle (10,-10);
    \fill[pastelgreen] (0,0) rectangle (5,-13);
    \fill[pastelgreen] (0,0) rectangle (3,-16);
    \fill[pastelred] (5,-13) rectangle (10,-16);
    \node[text=black] at (7.5,-14.5) {$R_{3,4}$};
        \node[text=black] at (1.5,-3) {$R_{1,1}$};
        \node[text=black] at (12.5,-3) {$R_{4,1}$};          
        \node[text=black] at (12.5,-3) {$R_{4,1}$};             
        \node[text=black] at (17.5,-18) {$R_{5,5}$};
    \draw[<->] (3,-14.5) -- (0,-14.5);
    \draw[<->] (7.5,0) -- (7.5,-10);
    \draw[-, line width=0.5mm] (0,0) -- (20,0) -- (20,-20) -- (0,-20) -- (0,0)-- (20,0);
    \draw[-,line width = 0.5mm] (0,0) -- (15,0) -- (15,-6) -- (10,-6) -- (10,-10) -- (5,-10) -- (5,-13) -- (3,-13) -- (3,-16) -- (0,-16) -- (0,0) -- (15,0);
    \draw[-,dashed] (0,0) -- (0,-20);
    \draw[-,dashed] (3,0) -- (3,-20);
    \draw[-,dashed] (5,0) -- (5,-20);
    \draw[-,dashed] (10,0) -- (10,-20);
    \draw[-,dashed] (15,0) -- (15,-20);
    \draw[-,dashed] (0,0) -- (20,0);
    \draw[-,dashed] (0,-6) -- (20,-6);
    \draw[-,dashed] (0,-10) -- (20,-10);
    \draw[-,dashed] (0,-13) -- (20,-13);
    \draw[-,dashed] (0,-16) -- (20,-16);
\end{tikzpicture}
\hfill
\begin{tikzpicture}[scale =0.3]
    \fill[pastelgreen] (0,0) rectangle (15,-6);
    \fill[pastelgreen] (0,0) rectangle (10,-10);
    \fill[pastelgreen] (0,0) rectangle (5,-13);
    \fill[pastelgreen] (0,0) rectangle (3,-16);
    \draw[-, line width=0.5mm] (0,0) -- (20,0) -- (20,-20) -- (0,-20) -- (0,0)-- (20,0);
    \draw[-,line width = 0.5mm] (0,0) -- (15,0) -- (15,-6) -- (10,-6) -- (10,-10) -- (5,-10) -- (5,-13) -- (3,-13) -- (3,-16) -- (0,-16) -- (0,0)-- (15,0);
    \draw[-,dashed] (0,0) -- (0,-20);
    \draw[-,dashed] (3,0) -- (3,-20);
    \draw[-,dashed] (5,0) -- (5,-20);
    \draw[-,dashed] (10,0) -- (10,-20);
    \draw[-,dashed] (15,0) -- (15,-20);
    \draw[-,dashed] (0,0) -- (20,0);
    \draw[-,dashed] (0,-6) -- (20,-6);
    \draw[-,dashed] (0,-10) -- (20,-10);
    \draw[-,dashed] (0,-13) -- (20,-13);
    \draw[-,dashed] (0,-16) -- (20,-16);

    \fill[pastelred] (2,-3) rectangle (7,-6);
    \node[text=black] at (4.5,-4.5) {$\Theta(R_{3,4})$};

\end{tikzpicture}
    \caption{Example of tectonic movement.}
    \label{fig:Theta}
\end{figure}
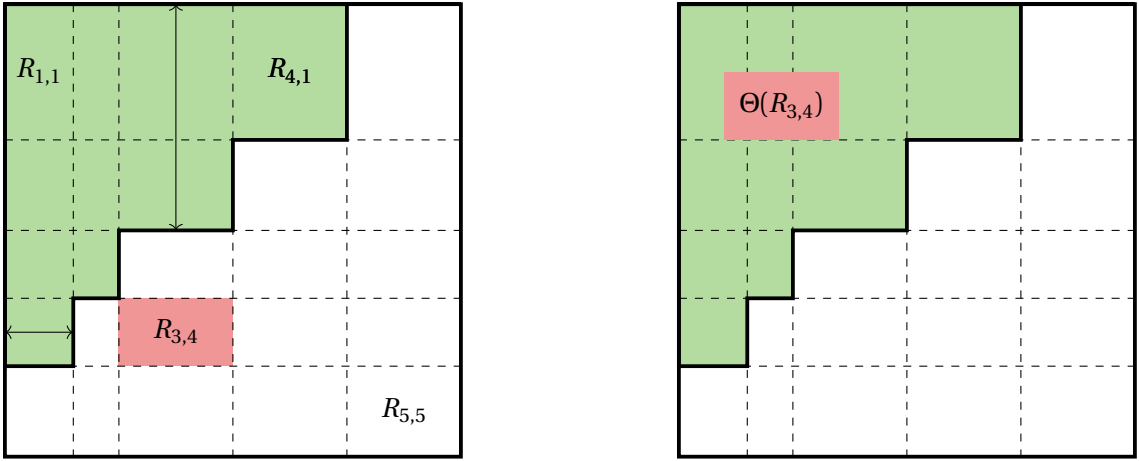

We prove two preliminary results on the tectonic movement. The first one describes the intersections of the $\lambda$-tectonic plates after the tectonic movement whenever $\lambda$ is a thin partition. 

   \begin{prop} 
   \label{prop:123:stella}
    Let $\lambda$ be a thin partition. 
        \begin{itemize}
            \item[i).] Let $R_{i,j}, R_{\alpha, \beta}$ be $\lambda$-tectonic plates. The intersection 
            \begin{equation*}
                \Theta_\lambda(R_{i,j}) \cap \Theta_\lambda(R_{\alpha,\beta})
            \end{equation*} 
            has measure zero except in the following cases:
            \begin{enumerate}
                \item $\alpha=i+1$ and $\beta = j-1$ (or $\alpha=i-1$ and $\beta = j+1$);
                \item $i+j = \alpha+\beta = K+2$.
            \end{enumerate}
            \item[ii).] Let $R_{K+2-j,K+1-i}$ and $R_{K+1-j,K+2-i}$ be 
                   $\lambda$-tectonic plates sharing a vertex. Then the  intersection 
            \begin{equation*}
                \Theta_\lambda(R_{K+2-j,K+1-i}) \cap \Theta_\lambda(R_{K+1-j,K+2-i})
            \end{equation*} 
            is a rectangle of the same size of $R_{i,j}$ and whose cells are in bijection to the internal hooks of $\lambda$ identified by cells of $R_{i,j}$.
            \item[iii).]Let $R_{i,j}, R_{\alpha, \beta}$ be $\lambda$-tectonic plates,with $i+j = \alpha+\beta = K+2$. Then the intersection 
            \begin{equation*}
            \Theta_\lambda(R_{i,j}) \cap \Theta_\lambda(R_{\alpha,\beta})
            \end{equation*} 
            is a rectangle of positive measure containing the origin.
         \end{itemize}
    \end{prop}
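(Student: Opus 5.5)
We will prove all three items by writing every moved plate $\Theta_\lambda(R_{i,j})$ as an explicit product of intervals and then comparing endpoints using the thinness inequalities. Set $X_n=x_1+\dots+x_n$ and $Y_n=y_1+\dots+y_n$, with $X_0=Y_0=0$ and $X_{K+1}=Y_{K+1}=\infty$. Then $R_{i,j}$ is the rectangle $[X_{i-1},X_i]\times[Y_{j-1},Y_j]$, in horizontal $\times$ vertical coordinates. It is a $\lambda$-tectonic plate exactly when $i+j\ge K+2$; the $\lambda$-tiles are those with $i+j\le K+1$.

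By \Cref{rmk:Theta}, a plate in row $j$ is shifted west by the width $X_{K+1-j}$ of $\lambda$ in that row, and a plate in column $i$ is shifted north by the height $Y_{K+1-i}$ of $\lambda$ in that column. With $p=i-1$, $r=K+1-j$, $q=j-1$, $s=K+1-i$ (so that $p\ge r$ and $q\ge s$), this gives
\begin{equation*}
\Theta_\lambda(R_{i,j})=[X_p-X_r,\;X_{p+1}-X_r]\times[Y_q-Y_s,\;Y_{q+1}-Y_s].
\end{equation*}
A plate with $p=r$ (equivalently $q=s$, i.e. $i+j=K+2$) is sent to $[0,x_{p+1}]\times[0,y_{q+1}]$. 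Any two such images are rectangles anchored at the origin, so their intersection is a rectangle of positive measure containing the origin. This proves iii) at once.

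For i), we study overlaps of the horizontal projections alone; the vertical ones are symmetric. Thinness gives $x_{n+1}\ge X_n$. From this we derive two estimates. First, the left endpoint of an interval with index $p'$ satisfies
\begin{equation*}
X_{p'}-X_{r'}\ \ge\ x_{p'}\ \ge\ X_{p'-1}.
\end{equation*}
Second, the right endpoint of an interval with index $p$ is at most $X_{p+1}$. Hence for $p'\ge p+2$ the projections overlap in at most a point. For $p'=p+1$ they overlap in positive length only when $r'<r$. For $p'=p$ they always overlap in positive length, because the shift difference $|X_r-X_{r'}|\le X_p\le x_{p+1}$.

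We then run through the combinations of horizontal case ($p'-p\in\{0,\pm1\}$) and vertical case ($q'-q\in\{0,\pm1\}$), using $p+s=K=q+r$ to tie the two together. This should leave exactly two surviving configurations: the anti-diagonal $i+j=\alpha+\beta=K+2$, and the diagonal neighbours $(\alpha,\beta)=(i\pm1,j\mp1)$. The other combinations force an inequality such as $r'<r$ together with its reverse, and so give measure zero. We expect this sign bookkeeping, and in particular confirming that same-row or same-column pairs off the anti-diagonal do not overlap, to be the main obstacle. The plan there is to write the two horizontal and two vertical comparisons side by side and check each one.

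For ii), we compute the intersection directly. The plates $R_{K+2-j,K+1-i}$ and $R_{K+1-j,K+2-i}$ have horizontal intervals
\begin{equation*}
[X_{K+1-j}-X_i,\;X_{K+2-j}-X_i]\quad\text{and}\quad[X_{K-j}-X_{i-1},\;X_{K+1-j}-X_{i-1}].
\end{equation*}
By thinness, their intersection is $[X_{K+1-j}-X_i,\;X_{K+1-j}-X_{i-1}]$, which has length $x_i$. In the same way the vertical intersection has length $y_j$, so the overlap is a rectangle congruent to the tile $R_{i,j}$. Finally, we match cells. A cell of the overlap at horizontal coordinate $X_{K+1-j}-X_{i-1}-t$ is an external hook of $\varnothing$ whose arm equals that coordinate. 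The cell of $R_{i,j}$ at offset $t$ from its left edge has arm $X_{K+1-j}-X_{i-1}-t-1$ with respect to $\lambda$, up to the unit convention for cells. The same holds for legs. So the translation identifying the overlap with $R_{i,j}$, composed with the $180^\circ$ rotation, is a hook-type preserving bijection onto the internal hooks of $\lambda$ in $R_{i,j}$.
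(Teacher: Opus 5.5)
Your items ii) and iii) are correct. In ii) the overlap is $[X_{K+1-j}-X_i,\,X_{K+1-j}-X_{i-1}]\times[Y_{K+1-i}-Y_j,\,Y_{K+1-i}-Y_{j-1}]$, and $(u,v)\mapsto(X_{K+1-j}-u,\,Y_{K+1-i}-v)$ carries it onto $R_{i,j}$ with hook types matched exactly. Your direct argument for iii) is cleaner than the paper's, which extracts it from the case analysis, and it does not use thinness. Your strategy for i), explicit intervals compared via thinness, is also the paper's.

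There is a genuine gap in i), because the key inequality is reversed. For $p'=p+1$, positive overlap needs the left endpoint $X_{p+1}-X_{r'}$ of the second interval to lie below the right endpoint $X_{p+1}-X_r$ of the first. That means $r'>r$, not $r'<r$: the plate one column to the right must sit in a higher, wider row. Symmetrically, $q'=q+1$ needs $s'>s$. This is not cosmetic. With your sign, $p'=p+1$ gives $r'<r$, hence $q'>q$, so your bookkeeping would keep $(\alpha,\beta)=(i+1,j+1)$ and discard the genuine overlaps $(i+1,j-1)$, the opposite of the claim. With the correct sign the combination is immediate:
\begin{itemize}
\item $p'=p+1$ forces $r'>r$, i.e.\ $q'<q$, so $q'=q-1$, and the vertical condition $s'=s-1<s$ then holds automatically;
\item $p'=p$ forces $s'=s$, which excludes $q'=q\pm1$;
\item likewise $q'=q$ excludes $p'=p\pm1$.
\end{itemize}
That settles the same-row and same-column pairs you flagged.

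The estimate $X_{p'}-X_{r'}\ge x_{p'}$ also needs a caveat: it requires $r'<p'$, i.e.\ the plate must be off the first anti-diagonal. As you noted yourself in iii), plates on that anti-diagonal are sent to $[0,x_{p'+1}]\times[0,y_{q'+1}]$. So ``$p'\ge p+2$ implies overlap in at most a point'' is false as stated. For example, take the thin partition $\lambda=(4,2,1,1)$, with $x=y=(1,1,2)$ and $K=3$. The images of $R_{4,1}$ and $R_{2,4}$ have horizontal projections $[0,\infty)$ and $[1,2]$, which overlap in positive length. The paper's proof is organized precisely around this distinction.

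The repair is as follows. If $p'\ge p+2$ and $r'=p'$, then $q'=K-p'\le s-2\le q-2$. The vertical estimate then applies to the other plate as long as $s<q$. If $s=q$, both plates lie on the first anti-diagonal, which is case (2). The same argument works with the roles of the two coordinates exchanged. With this, every pair outside case (2) with positive overlap has $|p'-p|,|q'-q|\le 1$, and the corrected sign argument finishes i).

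Separately, your claim that $p'=p$ always gives positive overlap fails when thinness holds with equality: $R_{2,4}$ and $R_{2,3}$ in the example above give $[1,2]$ and $[0,1]$. That direction is not needed for i), so it does no harm to the proof.
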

    \begin{proof} 
    Given two $\lambda$-tectonic plates, the intersection of their $\Theta$-image is a (possibly degenerate or empty) rectangle $A$ of height $y(A)$ and the width of $x(A)$ given by:
    \begin{align*}
        x(A) &= 
        \begin{cases}
            x_i, \qquad \text{ if } \sum_{t = K+2 -j}^{i-1} x_t = \min\left(\sum_{t = K+2 -j}^{i-1} x_t, \sum_{t = K+2 -\beta}^{\alpha-1} x_t\right)
            \\
            x_{\alpha}, \qquad \text{ if } \sum_{t = K+2 -\beta}^{\alpha-1} x_t = \min\left(\sum_{t = K+2 -j}^{i-1} x_t, \sum_{t = K+2 -\beta}^{\alpha-1} x_t\right)
        \end{cases}
        \!\!\!\! - \;\;
        \Big{|}
        \sum_{t = K+2 -j}^{i-1} \!\!\!\!\!\!\!\! x_t - \!\!\!\!\!\!\!\! \sum_{t = K+2 -\beta}^{\alpha-1} \!\!\!\!\!\!\!\! x_t
        \Big{|}
        \\
        y(A) &= 
        \begin{cases}
            y_j, \qquad \text{ if } \sum_{t = K+2-i}^{j-1} y_t = \min \left( \sum_{t = K+2-i}^{j-1} y_t, \sum_{t = K+2-\alpha}^{\beta-1} y_t \right)
            \\
            y_{\beta}, \qquad \text{ if } \sum_{t = K+2-\alpha}^{\beta-1} y_t = \min\left( \sum_{t = K+2-i}^{j-1} y_t, \sum_{t = K+2-\alpha}^{\beta-1} y_t \right).
        \end{cases}
        \!\!\!\! - \;\;
        \Big{|}
        \sum_{t = K+2-i}^{j-1} \!\!\!\!\!\!\!\! y_t - \!\!\!\!\!\!\!\! \sum_{t = K+2-\alpha}^{\beta-1} \!\!\!\!\!\!\!\! y_t .
        \Big{|}
    \end{align*}
    If at least one between $x(A)$ and $y(A)$ is zero (resp. negative), then $A$ is of measure zero (resp. empty). Notice that, by the thinness condition, each minimum can be resolved by selecting the sum with the highest index value. Therefore, the formulae above simplify to:
    \begin{align*}
    x(A) &= 
    \begin{cases}
    \sum_{t = K+2 -j}^{i} x_t - \sum_{t = K+2 -\beta}^{\alpha-1} x_t \quad \text{ if } i < \alpha
    \\
    \sum_{t = K+2 -\beta}^{\alpha} x_t - \sum_{t = K+2 -j}^{i-1} x_t \quad \text{ if } i > \alpha
    \end{cases}
    \\
    y(A) &= 
    \begin{cases}
    \sum_{t = K+2-i}^{j} y_t - \sum_{t = K+2-\alpha}^{\beta-1} y_t
    \quad \text{ if } j < \beta
        \\
    \sum_{t = K+2-\alpha}^{\beta} y_t  - \sum_{t = K+2-i}^{j-1} y_t
    \quad \text{ if } j > \beta
    \end{cases} 
\end{align*}
    Without loss of generality, we can work under the assumption that $i < \alpha$. Let us analyse the first quantity 
    \begin{equation*}
        \sum_{t = K+2 -j}^{i} x_t - \sum_{t = K+2 -\beta}^{\alpha-1} x_t.
    \end{equation*}
    Notice that, since $i+j \geq K+2$, the first sum can never vanish. The second sum, instead, since again $\alpha + \beta \geq K+2$, vanishes if and only if $\alpha+\beta = K+2$; in other words whenever $R_{\alpha,\beta}$ lies on the first antidiagonal. Similarly for the second case of $x(A)$.

    Let us assume that $\alpha + \beta = K+2$. Then $x(A) > 0$. Since $i+j\geq K+2$ and $i < \alpha$ by assumption, we must have $j > \beta$, hence 
    \begin{equation*}
        y(A) = \sum_{t = K+2-\alpha}^{\beta} y_t  - \sum_{t = K+2-i}^{j-1} y_t.
    \end{equation*}
    If $i+j = K+2$ then the second sum vanishes, hence $y(A) > 0$. If instead $i+j > K+2$, then the second sum includes the summand $y_{j-1}$ which is, again by thinness assumption, greater than the sum of any subset of other $y_t$ because all the other indices $t$ appearing are smaller than $j-1$. Therefore $y(A) < 0$ unless $y_{j-1}$ is cancelled out by $y_\beta$, which happens only if $\beta = j-1$. However, if $\beta = j-1$ we have 
    \begin{equation*}
        y(A) = \sum_{t = \beta}^{\beta} y_t  - \sum_{t = K+2-i}^{\beta} y_t = - \sum_{t = \alpha + \beta - i}^{\beta - 1} y_t \leq 0.
    \end{equation*}
    We just proved that if $R_{\alpha,\beta}$ is on the first antidiagonal, then $A$ is of of zero measure unless $R_{i,j}$ is also on the first antidiagonal, and of positive measure in that case. This is the statement of \emph{i).} $(2)$ and also of \emph{iii).}

    Let us now assume that $i+j > K+2$ and $\alpha + \beta > K+2$. We still assume $i < \alpha$ without loss of generality, hence 
    \begin{equation*}
        x(A) = \sum_{t = K+2 -j}^{i} x_t - \sum_{t = K+2 -\beta}^{\alpha-1} x_t.
    \end{equation*}
    Then $x(A) \leq 0$ unless $\alpha - 1 = i$ and $j < \beta$. By looking at the $j<\beta$ case for $y(A)$ one finds that for the same reason $y(A) \leq 0$ unless $j -1 = \beta$. Therefore $A$ is of positive measure only if the $\lambda$-tectonic plates indeed share a vertex and lie on the same antidiagonal, in which case $x(A) = x_{K+3-i} > 0 $ and $y(A) = y_{K+2-j} > 0$. This proves \emph{i).} $(1)$ as well as the statement about the size of the intersection in \emph{ii).} The statement about the conservation of the hook types follows from \Cref{rmk:Theta}. This concludes the proof of the proposition.
    \end{proof}

\begin{prop}
\label{prop:surj}
    Let $\lambda$ be a thin partition. Then  $\Theta_\lambda$ is surjective.
\end{prop}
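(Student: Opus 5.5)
The plan is to prove surjectivity slice by slice. I fix a height $v\ge 0$ and show that the horizontal line at height $v$ is covered by the $\Theta_\lambda$-images of the $\lambda$-tectonic plates. Write $X_n=x_1+\dots+x_n$ and $Y_n=y_1+\dots+y_n$, with $X_0=Y_0=0$ and $X_{K+1}=Y_{K+1}=\infty$. The plate $R_{i,j}$ lies outside $\lambda$ exactly when $i+j\ge K+2$, and it occupies $[X_{i-1},X_i]\times[Y_{j-1},Y_j]$.

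First I will make the images explicit. By \Cref{rmk:Theta}, $R_{i,j}$ is shifted west by the width of $\lambda$ west of it, namely $X_{K+1-j}$, and north by the height of $\lambda$ north of it, namely $Y_{K+1-i}$. Hence
\begin{equation*}
\Theta_\lambda(R_{i,j})=[X_{i-1}-X_{K+1-j},\,X_i-X_{K+1-j}]\times[Y_{j-1}-Y_{K+1-i},\,Y_j-Y_{K+1-i}].
\end{equation*}
Two special cases guide the argument. For the first antidiagonal $i+j=K+2$ the image is $[0,x_i]\times[0,y_{K+2-i}]$; these rectangles contain the origin, as in \Cref{prop:123:stella} iii), and their union is a staircase region. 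For fixed $i$, the vertical intervals for $j=K+2-i,\dots,K+1$ concatenate exactly to $[0,\infty)$, since they start at $Y_{K+1-i}-Y_{K+1-i}=0$ and consecutive ones share endpoints. Thus each column $i$ of plates maps onto a staircase strip that covers every height exactly once, with its horizontal window drifting right as $j$ increases.

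Now fix $v$. For each $i=1,\dots,K+1$ there is a unique $j=j(i,v)$ (up to boundary ties) with $v\in[Y_{j-1}-Y_{K+1-i},Y_j-Y_{K+1-i}]$. Let $I_i=[X_{i-1}-X_{K+1-j(i,v)},\,X_i-X_{K+1-j(i,v)}]$ be the corresponding horizontal window. It suffices to show two things.
\begin{enumerate}
\item The first window starts at $0$: for $i=1$ only $j=K+1$ is allowed, so $I_1=[0,x_1]$.
\item The windows chain with no gaps: the left endpoint of $I_{i+1}$ is at most the right endpoint of $I_i$, and the right endpoints tend to $\infty$. The latter is immediate for $i=K+1$ because $X_{K+1}=\infty$.
\end{enumerate}
The gap condition reads $X_i-X_{K+1-j(i+1,v)}\le X_i-X_{K+1-j(i,v)}$. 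This amounts to $j(i+1,v)\le j(i,v)$, up to the fact that equal shifts give exactly abutting intervals. So the core claim is monotonicity: as $i$ increases, the upward shift $Y_{K+1-i}$ decreases, and the plate index $j(i,v)$ hit at height $v$ cannot increase.

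The main obstacle is this monotonicity step, because the shifts and the interval lengths interact. Passing from $i$ to $i+1$ lowers the vertical offset by $y_{K+1-i}$, so $v$ corresponds to the unshifted height $v+Y_{K-i}$ instead of $v+Y_{K+1-i}$. That height is smaller, hence it lies in a row index $j'\le j$. Since $Y$ is increasing this looks automatic, so I expect monotonicity to hold for every $\lambda$. Thinness should instead be needed for the constraint $i+j\ge K+2$: the plate $R_{i+1,j'}$ must actually be a tectonic plate, and not a tile inside $\lambda$.

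I would handle this constraint exactly as in the proof of \Cref{prop:123:stella}. The condition $j'\ge K+1-i$ holds because the lowest allowed row in column $i+1$ starts at height $0$ after shifting, and the endpoints are ordered via the thinness inequalities $X_n\le x_{n+1}$ and $Y_n\le y_{n+1}$. These inequalities let each comparison of partial sums be resolved by the largest-index term. If the direct chaining becomes messy, my fallback is induction on $K$: remove the last distinct part, so that thinness is inherited, and check that the new semi-infinite plates fill exactly the region uncovered by the smaller configuration.
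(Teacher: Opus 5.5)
Your proof is correct, and it takes a genuinely different route from the paper.

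\textbf{How the two proofs differ.} The paper argues globally:
\begin{itemize}
\item it uses \Cref{prop:123:stella}, whose proof resolves every comparison of partial sums by thinness, to get overlaps of images of vertex-adjacent plates on an antidiagonal;
\item it uses thinness again ($y_{K+1-j}\le y_i$) to keep images of edge-adjacent plates glued, and deduces that the image is (simply) connected;
\item it then appeals to the semi-infinite plates to \emph{assert} that the image is closed upwards, and concludes from $(0,0)\in\Theta_\lambda(AD'_1(\lambda))$.
\end{itemize}
Your sweep-line argument rests instead on two exact facts. First, the plates of each column tile $[0,\infty)$ vertically after shifting. Second, the row index hit at height $v$ is monotone in the column. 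Setting $j(i,v)=\min\{j\ge K+2-i : Y_j\ge v+Y_{K+1-i}\}$ settles ties, and makes monotonicity a one-liner, since $j(i,v)$ lies in the set defining $j(i+1,v)$. Then $L_1=0$, $0\le L_{i+1}\le R_i$ and $R_{K+1}=\infty$ cover the slice. In effect you prove the upward-closure step that the paper only asserts.

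What the paper's route buys is that it sits inside the overlap analysis of \Cref{prop:123:stella}. That analysis is needed anyway in \Cref{thm: Bessen_original_thin} to identify the doubly covered regions with $\lambda$-tiles. Your argument gives surjectivity only.

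\textbf{Thinness is not needed.} You say thinness is required for the plate constraint $j'\ge K+1-i$. That constraint is already built into $j(i+1,v)$ by your own column-tiling observation: the height $v+Y_{K-i}\ge Y_{K-i}$ lies in a row of index at least $K+1-i$. No comparison of partial sums enters. So you can drop the appeal to $X_n\le x_{n+1}$, $Y_n\le y_{n+1}$ and the fallback induction on $K$.

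Your argument therefore shows that $\Theta_\lambda$ is surjective for \emph{every} partition. This is consistent with the non-thin Cases II and III, and with Bessenrodt's theorem, which forces every hook type to occur among the external hooks of $\lambda$.

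In discrete form the proof reads as follows. Fix the leg length $r$. The arm of the unique external cell of leg $r$ in a given column increases by at most one from each column to the next. It equals $0$ in the first column and tends to infinity, so every arm value is attained.
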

\begin{proof}
    The result follows from the application of the following two arguments: 
    \begin{itemize}
        \item[1.]\label{eqn:1} if two $\lambda$-tectonic plates share a vertex then the intersection of their $\Theta$-image has non zero measure, by point \emph{i).} of \Cref{prop:123:stella};

        \item[2.] If two $\lambda$-tectonic plates share a vertical edge then their $\Theta$-images share again a portion of a vertical edge. In particular $\Theta_\lambda(R_{(i,j)})\cap \Theta_\lambda(R_{(i,j+1)}) \neq \varnothing \neq \Theta_\lambda(R_{(i,j)})\cap \Theta_\lambda(R_{(i+1,j)})$. In fact, we have that 
         \begin{equation*}
            \ell'_{\varnothing}(\Theta_{\lambda}(R_{(i,j)})) - \ell'_{\varnothing}(\Theta_{\lambda}(R_{(i,j+1)})) = y_{K+1-j}.
    \end{equation*}
    This is true because $i > K+1-j$ since $R_{(i,j)}$ is a $\lambda$-tectonic plate and hence $y_{K+1-j} \leq y_i$ since the partition is thin. Similarly for horizontally adjacent $\lambda$-tectonic plates.
    \end{itemize} 
    
     We now apply these principles in the following way. For $k = 1, \dots, K$, consider the  sets
    \begin{align*}
    AD'_k(\lambda) &\coloneqq \{ R \, \text{ a } \, \lambda\text{-tectonic plate of coordinates $(i,j)$} \, | \,  i + j = K + 1 + k \},
    \end{align*}
    which satisfies
    \[
       \BR^{2}_{\geq 0}\setminus Y_{\lambda} = \bigcup_{k=1}^K AD'_k(\lambda).
    \]
    By item 1. above we have that
    \begin{equation*}
        \Theta_\lambda(AD'_k(\lambda)) = \bigcup_{R \in AD'_k(\lambda)} \Theta(R)
    \end{equation*}
    is simply connected.
    By item $2.$ above we have that
    \begin{equation*}
        \Theta_\lambda(AD'_k(\lambda)) \cup \Theta_\lambda(AD'_{k+1}(\lambda))
    \end{equation*}
    is again simply connected \textemdash \; and hence 
    \begin{equation*}
        \bigcup_{k=1}^K \Theta_\lambda(AD'_k(\lambda)) = \Theta_\lambda\left(\bigcup_{k=1}^K AD'_k(\lambda) \right)
    \end{equation*}
    is simply connected. To conclude the proof, we   notice that 
    \begin{itemize}
        \item the $\lambda$-tectonic plate with coordinates $(K+1,K+1)$ is fixed by $\Theta_\lambda$ and is infinite both in horizontal and vertical length,
        \item the  $\lambda$-tectonic plates  with coordinates $(K+1,j)$ and $(j,K+1)$ for $j=1, \dots, K$ are infinite in either the horizontal or vertical direction.
    \end{itemize}
    These fact imply that if $(a,b)\in \Theta_\lambda\left(\bigcup_{k=1}^K AD'_k(\lambda) \right)$ and\footnote{By this we mean  that $a'\geq a$ and $ b'\geq b$.} $(a',b')\geq (a,b)$, then 
    $$
    (a',b')\in \Theta_\lambda\left(\bigcup_{k=1}^K AD'_k(\lambda) \right)
    $$
    as well. The proof is concluded by noticing that $(0,0) \in \Theta(AD'_1(\lambda))$.
\end{proof}

We are ready to prove Bessenrodt's original result \Cref{thm: Bessen_original} in the case of thin partitions.
\begin{theorem}\label{thm: Bessen_original_thin}
    Let $\lambda$ be a thin partition. There is a bijection of sets
    \begin{align}
    \label{eq:DT:PT:sets}
\mathcal{H}'(\lambda) \longleftrightarrow \mathcal{H}'(\varnothing)\cup \mathcal{H}(\lambda),
    \end{align}
    which preserves hook types.
\end{theorem}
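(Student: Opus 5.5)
The bijection will be realised cell by cell through the tectonic movement $\Theta_\lambda$. Cells $\Box\notin\lambda$ index $\mathcal{H}'(\lambda)$, and cells of $\BZ^2_{\geq 0}$ index $\mathcal{H}'(\varnothing)$. By \Cref{rmk:Theta}, $\Theta_\lambda$ sends the cell of an external hook of $\lambda$ to the cell of an external hook of $\varnothing$ with the same hook type. Indeed, the arm of $\Box$ in a plate with coordinates $(i,j)$ consists of the columns outside $\lambda$ to its left, and its length is the horizontal coordinate minus the width of $\lambda$ in that row. The same holds for the leg. It therefore suffices to count fibres: $\Theta_\lambda$, restricted to unit cells, is a hook-type-preserving map $\mathcal{H}'(\lambda)\to\mathcal{H}'(\varnothing)$. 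I will show that every cell of the target has exactly one preimage, except for a controlled family of cells with exactly two preimages, and that the extra preimages correspond bijectively to $\mathcal{H}(\lambda)$ with matching hook types.

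The first step is surjectivity, which is exactly \Cref{prop:surj}. For the fibre count, cells in the interior of a single plate image have one preimage. By \Cref{prop:123:stella}~i), overlaps of positive measure occur only in two situations. The first is plates on the first antidiagonal $i+j=K+2$. The second is pairs $R_{K+2-j,K+1-i}$, $R_{K+1-j,K+2-i}$ sharing a vertex on a higher antidiagonal. For the second type, \Cref{prop:123:stella}~ii) says the overlap is a rectangle of the size of the tile $R_{i,j}\subset\lambda$, with cells in bijection with the internal hooks indexed by $R_{i,j}$. I will define the bijection by sending one of the two preimages (say the one in the plate with larger row index) to $\mathcal{H}'(\varnothing)$, and the other to the corresponding cell of $R_{i,j}$, hence to $\mathcal{H}(\lambda)$. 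Hook types agree: the external hook type equals that of its $\Theta$-image cell, and \Cref{prop:123:stella}~ii) matches it with the internal hook type of the tile cell. I will make this explicit by checking that the arm of an internal cell in $R_{i,j}$ is the sum of the $x_t$ to its right inside $\lambda$, which equals the offset computed in the proof of \Cref{prop:123:stella}.

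The main obstacle is the first antidiagonal, where by \Cref{prop:123:stella}~iii) all images $\Theta_\lambda(R_{i,K+2-i})$ pairwise overlap in rectangles containing the origin. Triple overlaps could therefore occur, and a naive count would be wrong. I would first check that there are no triple overlaps anywhere else, so that only this corner needs care. Since tiles on the antidiagonal of $\lambda$ at $i+j=K+1$ are the corner tiles, I expect the images of the $K+1$ first-antidiagonal plates to form a staircase whose $(m)$-fold covered region decomposes as a union of rectangles of the sizes of the tiles $R_{i,j}$ with $i+j=K+1$. I would compute the image corners explicitly, using thinness to order the partial sums. Then I would show by inclusion–exclusion along the ordering of $i$ that the multiplicity of each cell minus one equals the number of tile cells it corresponds to, with matching hook types. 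Finally I would check that the cells of $\lambda$ covered this way together with those from \Cref{prop:123:stella}~ii) exhaust every tile of $\lambda$ exactly once. The tiles with $i+j\le K+1$ are indexed by the vertex-sharing pairs, plus the first-antidiagonal overlaps, via $(i,j)\mapsto$ the pair $(K+2-j,K+1-i),(K+1-j,K+2-i)$. Once this accounting is complete, the bijection \eqref{eq:DT:PT:sets} follows.
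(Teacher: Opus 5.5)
Your proposal is correct and follows the paper's argument. You push the plate cells forward by $\Theta_\lambda$, use \Cref{prop:surj} to extract one copy of $\mathcal{H}'(\varnothing)$, and identify the excess multiplicity with the overlaps of vertex-sharing pairs on a common antidiagonal, which \Cref{prop:123:stella}~ii) matches with the tiles via $(i,j)\mapsto((K+2-j,K+1-i),(K+1-j,K+2-i))$. Your extra care on the first antidiagonal is warranted, since the paper simply declares the overlap there to be a union of consecutive pairwise intersections, and it closes easily: the images of those plates are rectangles anchored at the origin of width $x_i$ and height $y_{K+2-i}$ (with $x_{K+1}=y_{K+1}=\infty$), thinness makes the widths non-decreasing and the heights non-increasing in $i$, so the images covering any cell form a consecutive run, and the excess multiplicity equals the number of consecutive pairs covering it, each such pair overlap being a copy of the corner tile $R_{i,K+1-i}$.
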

\begin{proof} 
    We set $\Theta=\Theta_\lambda$ to ease the notation. Consider the union of all $\lambda$-tectonic plates $W_\lambda$. The collection of cells inside $W_\lambda$ correspond to the set $\mathcal{H}'(\lambda)$ of external hooks of $\lambda$. We now build the required bijection by moving these cells, packaged in $\lambda$-tectonic plates, via the tectonic movement $\Theta$. More precisely, we will assign the cells in $W_\lambda$   to  $\mathcal{H}'(\varnothing)$ and  $\mathcal{H}(\lambda)$. 

    By \Cref{rmk:Theta} a cell representing an external hook to $\lambda$ is mapped by $\Theta$ to a cell representing an external hook to the empty partition of the same hook type.
    
    By \Cref{prop:surj} we have that $\Theta_{\lambda}$ is surjective on $\mathbb{R}^2_{\geq 0}$, hence $\mathcal{H}'(\varnothing)$ is in bijection with a subset of $\Theta(\mathcal{H}'(\lambda))$, which we denote again by $\mathcal{H}'(\varnothing)$ by slight abuse of notation. Therefore we need to show that $\Theta(\mathcal{H}'(\lambda)) \setminus \mathcal{H}'(\varnothing)$ is in bijection with $\mathcal{H}(\lambda)$. We are going to establish this equality by studying the intersections of the $\Theta$-images of tectonic plates.
    

    The statement is ensured by \Cref{prop:123:stella}: if two plates do not intersect to begin with, then their $\Theta$-images cannot intersect, except if they lie in the first external antidiagonal (point \emph{i).}). If they do, we can still consider the chain of plates in the first antidiagonal and consider the whole intersection of their $\Theta$-images as the union of intersections of $\Theta$-images of pairs of plates sharing a vertex (point \emph{iii).}). If they do not lie in the first antidiagonal, then they only intersect if they lie in the same antidiagonal and share a vertex. In any case, we can reduce the analysis of the entire intersection as the union of intersections of $\Theta$-images of pairs of plates on the same antidiagonal sharing a vertex:
    \begin{equation*}
    \Theta(\mathcal{H}'(\lambda)) \setminus \mathcal{H}'(\varnothing) 
    =
    \bigcup_{\substack{i=1, \dots, K \\ j=2, \dots, K+1 \\ i+j \geq K+2}} \!\!\!\!\! \Theta(R_{i,j}) \; \cap \; \Theta(R_{i+1, j-1})
    \end{equation*}
    
    Now, by point \emph{ii).}, the external hooks of $\Theta(R_{K+2-j,K+1-i}) \; \cap \; \Theta(R_{K+1-j, K+2-i})$ have the same hook type as the internal hooks of $R_{i,j}$. Moreover, there is an obvious bijection between $\lambda$-tiles and such pairs of $\lambda$-plates, constructed by identifying the coordinates 
    \begin{equation*}
        (i,j) \longleftrightarrow ((K+2-j,K+1-i),(K+1-j,K+2-i)).
    \end{equation*}
    This concludes the proof of the theorem.
\end{proof}

\subsection{The $K \leq 2$ case}
We show in this section how to generalise the proof of \Cref{thm: Bessen_original_thin} for a general partition (not necessarily thin) $\lambda$ with $K \leq 2$. 

Although the partitions for  the $K=0,1$ cases are necessarily thin -- and hence they are covered by \Cref{thm: Bessen_original_thin} -- we quickly go through them for completeness. For $K=0$ the partition $\lambda$ must be empty and the result trivially holds.
 For $K=1$ the Young diagram of $\lambda$ is a rectangle. Then, the only two $\lambda$-tectonic plates moving must slide to the origin, intersecting precisely on $\lambda$. Out of the two copies of the rectangle $\lambda$, the hooks of one copy correspond to the external hooks of the empty set, which is the first set in the RHS of \Cref{eq:DT:PT:sets}, the second copy of the rectangle can be rotated by $\pi$ so that its hooks now match the internal hooks of $\lambda$, which is the second set in the RHS of \Cref{eq:DT:PT:sets}.

For $K=2$, there are three cases to be considered: 
\begin{itemize}
    \item[\textit{Case I}] The thin case, where the thinness condition is satisfied in both directions: $$x_1 < x_2 \quad \wedge \quad y_1 < y_2;$$
    \item[\textit{Case II}] The case in which the thinness condition is violated in both directions: $$x_1 > x_2 \quad \wedge \quad y_1 > y_2;$$
    \item[\textit{Case III}] The case in which the thinness condition is satisfied only in one of the two directions: without loss of generality we can assume:
    $$x_1 > x_2 \quad \wedge \quad y_1 < y_2.$$
\end{itemize}

The first case is covered by \Cref{thm: Bessen_original_thin}, therefore we are left with proving the second and third case. The proof consists in keeping track of the tectonic movement of each plate and their intersections, rearranging them in such a way to cover entirely $\BR^2_{\geq 0}$ together with an extra copy of the original Young diagram $\lambda$ containing exactly its internal hooks content.

\subsubsection{Case II} Let $\lambda$ be a partition such that  $x_1 > x_2$ and $y_1 > y_2$. We name the $\lambda$-tiles by antidiagonals according to the first drawing in \Cref{fig:thick}. Let us consider pairs of $\lambda$-tectonic plates and compute the intersections of their image via $\Theta$. 

Notice for instance that the semi-infinite rectangle $P_1$ slides northwards all the way to the $x$-axis, so that $\Theta(P_1)$ covers all of $A, B_1,\text{ and } P_1$. In the same way $P_3$ slides westwards. Therefore the semi-infinite rectangles $\Theta(P_1)$ and $\Theta(P_3)$ overlap covering $A$ twice. With the same logic, $\Theta(P_2)$ is a rectangle of the same size of $P_2$, contained in $A$ and adjacient to both axes.
$\Theta(Q_1)$ intersects with $\Theta(P_3)$, inside of $B_2$, covering it completely except for a $y_1$ gap northwards, due to the presence of the $P_2$ plate. 

Similarly $\Theta(Q_1)$ intersects $\Theta(Q_2)$ exactly in $P_2$, and $\Theta(P_1)$ and $\Theta(Q_2)$ intersect inside $B_1$ leaving a gap westwards of the same width as the width of $P_2$. In the second drawing of \Cref{fig:thick}, the $\Theta$-images of the $\lambda$-tectonic plates are depicted: each colour in each rectangle represent the image of a tectonic plate of the same colour. For instance, the three colours in the rectangle labelled $\Theta(P_2)$ represent the overlapping of the $\Theta$-images of $P_1, P_2$ and $P_3$.

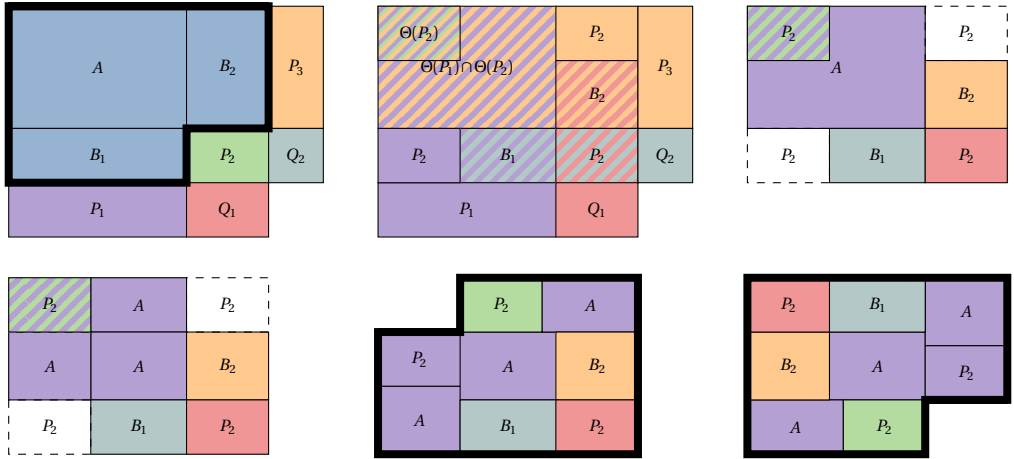
\begin{figure}[h!]
    \centering
    \begin{tikzpicture}[scale=0.18,every node/.style={scale=0.6}]
    \begin{scope}
       
    \stripedBox{0,0}[pastelblue]{13}{-9}{$A$}
    
    \stripedBox{13,0}[pastelblue]{6}{-9}{$B_2$}
    
    \stripedBox{0,-9}[pastelblue]{13}{-4}{$B_1$}
    
    \stripedBox{0,-13}[pastelpurple]{13}{-4}{$P_1$}
    
    \stripedBox{13,-9}[pastelgreen]{6}{-4}{$P_2$}
    \stripedBox{19,0}[pastelorange]{4}{-9}{$P_3$}
    \stripedBox{13,-13}[pastelred]{6}{-4}{$Q_1$}
    \stripedBox{19,-9}[pastelgray]{4}{-4}{$Q_2$}
    \draw[-, line width= 3.0pt] (0,0) -- (19,0) -- (19,-9) --(13,-9) -- (13,-13) -- (0,-13) --(0,0)-- (19,0);

   \end{scope} 
    \begin{scope}[shift={(27,0)}]
  
    \stripedBox{0,0}[pastelorange][pastelpurple]{13}{-9}{$\Theta(P_1) \cap \Theta(P_2)$}
    
    \stripedBox{13,-4}[pastelorange][pastelred]{6}{-5}{$B_2$}
    
    \stripedBox{6,-9}[pastelpurple][pastelgray]{7}{-4}{$B_1$}
    
    \stripedBox{0,-13}[pastelpurple]{13}{-4}{$P_1$}
    
    \stripedBox{0,0}[pastelgreen][pastelorange][pastelpurple]{6}{-4}{$\Theta(P_2)$}
    
    \stripedBox{13,0}[pastelorange]{6}{-4}{$P_2$}
    
    \stripedBox{0,-9}[pastelpurple]{6}{-4}{$P_2$}
    \stripedBox{13,-9}[pastelred][pastelgray]{6}{-4}{$P_2$}
    \stripedBox{19,0}[pastelorange]{4}{-9}{$P_3$}
    \stripedBox{13,-13}[pastelred]{6}{-4}{$Q_1$}
    \stripedBox{19,-9}[pastelgray]{4}{-4}{$Q_2$}

  \end{scope}  
    \begin{scope}[shift={(54,0)}]

    \stripedBox{0,0}[pastelpurple]{13}{-9}{$A$}
    
    \stripedBox{13,-4}[pastelorange]{6}{-5}{$B_2$}
    
    \stripedBox{6,-9}[pastelgray]{7}{-4}{$B_1$}
    
    \stripedBox{0,0}[pastelgreen][pastelpurple]{6}{-4}{$P_2$}
    
    \stripedBox{13,0}{6}{-4}{$P_2$}
    
    \stripedBox{0,-9}{6}{-4}{$P_2$}
    \stripedBox{13,-9}[pastelred]{6}{-4}{$P_2$}

  \end{scope}
    \begin{scope}[shift={(0,-20)}]

    \stripedBox{6,0}[pastelpurple]{7}{-4}{$A$}
    \stripedBox{6,-4}[pastelpurple]{7}{-5}{$A$}
    \stripedBox{0,-4}[pastelpurple]{6}{-5}{$A$}
    
    \stripedBox{13,-4}[pastelorange]{6}{-5}{$B_2$}
    
    \stripedBox{6,-9}[pastelgray]{7}{-4}{$B_1$}
    
    \stripedBox{0,0}[pastelgreen][pastelpurple]{6}{-4}{$P_2$}
    
    \stripedBox{13,0}{6}{-4}{$P_2$}
    
    \stripedBox{0,-9}{6}{-4}{$P_2$}
    \stripedBox{13,-9}[pastelred]{6}{-4}{$P_2$}

  \end{scope}
    \begin{scope}[shift={(27,-20)}]

    \stripedBox{6,-4}[pastelpurple]{7}{-5}{$A$}

    \stripedBox{6,0}[pastelgreen]{6}{-4}{$P_2$}
    \stripedBox{12,0}[pastelpurple]{7}{-4}{$A$}
    
    \stripedBox{0,-4}[pastelpurple]{6}{-4}{$P_2$}
    \stripedBox{0,-8}[pastelpurple]{6}{-5}{$A$}
    
    \stripedBox{13,-4}[pastelorange]{6}{-5}{$B_2$}
    
    \stripedBox{6,-9}[pastelgray]{7}{-4}{$B_1$}
    \stripedBox{13,-9}[pastelred]{6}{-4}{$P_2$}
    \draw[-, line width= 3 pt] (0,-4) -- (6,-4) -- (6,0) -- (19,0) -- (19,-13) -- (0,-13) -- (0,-4)-- (6,-4);

  \end{scope}

    \begin{scope}[shift={(73,-33)},rotate=180]
    \stripedBox{6,-4}[pastelpurple]{7}{-5}{$A$}

    \stripedBox{6,0}[pastelgreen]{6}{-4}{$P_2$}
    \stripedBox{12,0}[pastelpurple]{7}{-4}{$A$}
    
    \stripedBox{0,-4}[pastelpurple]{6}{-4}{$P_2$}
    \stripedBox{0,-8}[pastelpurple]{6}{-5}{$A$}
    
    \stripedBox{13,-4}[pastelorange]{6}{-5}{$B_2$}
    
    \stripedBox{6,-9}[pastelgray]{7}{-4}{$B_1$}
    \stripedBox{13,-9}[pastelred]{6}{-4}{$P_2$}
    \draw[-, line width= 3 pt] (0,-4) -- (6,-4) -- (6,0) -- (19,0) -- (19,-13) -- (0,-13) -- (0,-4)-- (6,-4);

  \end{scope}
\end{tikzpicture}
    \caption{Procedure for a thick partition.}
    \label{fig:thick}
\end{figure}

Notice that, by the definition of tectonic movement $\Theta$, all the cells in the second drawing correspond to external hooks of the empty partition. Therefore, collecting a copy of $\mathbb{R}^2_{\geq 0}$, i.e. collecting any one colour from each rectangle, forms the first set $\mathcal{H}'({\varnothing})$ in the RHS of \Cref{eq:DT:PT:sets}. We then remove a colour from each rectangle, obtaining the third drawing in \Cref{fig:thick}, and we need to show that what is left correponds exactly to $\mathcal{H}({\lambda})$. We are going to show this by cutting and rearranging the remaining rectangles.

Let us cut $A$ in four rectangles as in drawing 4 of \Cref{fig:thick}, and let us slide them as in the drawing 5 of \Cref{fig:thick}. We have now obtained the same shape of the diagram $\lambda$, rotated by $\pi$. Let us observe for instance what happens to the hooks in the green rectangle $P_2$ before and after sliding eastwards (drawing 4 and 5): before sliding in drawing 4 each cell of the green $P_2$ correspond to an external hook of the empty partition, having leg northwards and arm westwards; after sliding in drawing 5 the hooks remain with the same orientation and lengths, so by rotating the whole shape by $\pi$ in drawing 6, the external hooks of the empty partition become internal hooks of $\lambda$. It is easy to see that this holds for each other rectangle, bringing external hooks of the empty partition exactly to the set $\mathcal{H}(\lambda)$ of internal hook of $\lambda$.

\subsubsection{Case III} 
Without loss of generality we can assume that $\lambda$ is a partition such that $x_1>x_2$ and $y_2 >y_1$. We name the tiles by antidiagonals according to the first drawing of \Cref{fig:semithick}.

The proof goes exactly as in \emph{Case II}: the second drawing represents the intersection of the tectonic movement of the tectonic plates, from the second to the third drawing we removed a colour per rectangle to account for $\mathcal{H}'(\varnothing)$, in the fourth drawing we cut the rectangle $B$ in two parts and we rearrange the rectangles in drawing 5, obtaining the rotated shape of the Young diagram $\lambda$. In drawing 6 we rotate the shape by $\pi$, matching the cells with internal hooks of $\lambda$, hence obtaining $\mathcal{H}(\lambda).$ This concludes the proof for $K \leq 2$ for not necessarily thin partitions $\lambda$.

\begin{figure}[h!]
    \centering
    \begin{tikzpicture}[xscale=0.18, yscale=0.3,every node/.style={scale=0.6}]
    \begin{scope}[shift={(0,0)}]
    
    \stripedBox{0,0}[pastelblue]{10}{-2}{$A$}
    
    \stripedBox{10,0}[pastelblue]{4}{-2}{$B_2$}
    
    \stripedBox{0,-2}[pastelblue]{10}{-3}{$B_1$}
    
    \stripedBox{0,-5}[pastelpurple]{10}{-4}{$P_1$}
    \stripedBox{10,-2}[pastelgreen]{4}{-3}{$P_2$}
    \stripedBox{14,0}[pastelorange]{4}{-2}{$P_3$}
    \stripedBox{10,-5}[pastelred]{4}{-4}{$Q_1$}
    \stripedBox{14,-2}[pastelgray]{4}{-3}{$Q_2$}
    \draw[-, line width= 3.0pt] (0,0) -- (14,0) -- (14,-2) --(10,-2) -- (10,-5) -- (0,-5) --(0,0) -- (14,0);
\end{scope}
    \begin{scope}[shift={(25,0)}]
    \stripedBox{0,0}[pastelorange][pastelpurple]{10}{-2}{$A$}
    \stripedBox{10,0}[pastelorange]{4}{-2}{$B_2$}
    \stripedBox{10,-2}[pastelgray]{4}{-1}{$B_2$}
    
    \stripedBox{0,-2}[pastelpurple]{10}{-3}{$B_1$}
    \stripedBox{4,-2}[pastelpurple][pastelgray]{6}{-3}{$B_1$}
    
    \stripedBox{10,-3}[pastelgray][pastelred]{4}{-2}{$B_2$}
    \stripedBox{0,-5}[pastelpurple]{10}{-4}{$P_1$}
    \stripedBox{0,0}[pastelgreen][pastelorange][pastelpurple]{4}{-3}{$P_2$}
    \stripedBox{0,-2}[pastelgreen][pastelpurple]{4}{-1}{}
    \stripedBox{14,0}[pastelorange]{4}{-2}{$P_3$}
    \stripedBox{10,-5}[pastelred]{4}{-4}{$Q_1$}
    \stripedBox{14,-2}[pastelgray]{4}{-3}{$Q_2$}
\end{scope}
    \begin{scope}[shift={(50,0)}]
    \stripedBox{0,0}[pastelorange]{10}{-2}{$A$}
    \stripedBox{4,-2}[pastelgray]{6}{-3}{$B_1$}
    \stripedBox{10,0}{4}{-3}{}    
    \stripedBox{10,-3}[pastelred]{4}{-2}{$B_2$}
    \stripedBox{0,0}[pastelgreen][pastelorange]{4}{-3}{$P_2$}
    \stripedBox{0,-2}[pastelgreen]{4}{-1}{}
    \stripedBox{0,-3}{4}{-2}{}    
\end{scope}
    \begin{scope}[shift={(0,-13)}]
    \stripedBox{0,0}[pastelorange]{10}{-2}{$A$}
    \stripedBox{4,-2}[pastelgray]{6}{-1}{$B_1$}
    \stripedBox{4,-3}[pastelgray]{6}{-2}{$B_1$}
    
    \stripedBox{10,-3}[pastelred]{4}{-2}{$B_2$}
    \stripedBox{0,0}[pastelgreen][pastelorange]{4}{-3}{$P_2$}
    \stripedBox{0,-2}[pastelgreen]{4}{-1}{}
    \stripedBox{10,0}{4}{-3}{}    
    \stripedBox{0,-3}{4}{-2}{}    
\end{scope}
    \begin{scope}[shift={(25,-13)}]
    \stripedBox{4,0}[pastelorange]{10}{-2}{$A$}
    \stripedBox{8,-2}[pastelgray]{6}{-1}{$B_1$}
    \stripedBox{4,-3}[pastelgray]{6}{-2}{$B_1$}
    
    \stripedBox{4,0}[pastelgreen]{4}{-3}{$P_2$}
    \stripedBox{10,-3}[pastelred]{4}{-2}{$B_2$}
    \stripedBox{0,-3}[pastelorange]{4}{-2}{$P_2$}
    \draw[-,line width=3pt] (0,-3) -- (4,-3) -- (4,0) -- (14,0) -- (14,-5) -- (0,-5) -- (0,-3)-- (4,-3);
\end{scope}   
    \begin{scope}[shift={(64,-18)}, rotate=180]
    \stripedBox{4,0}[pastelorange]{10}{-2}{$A$}
    \stripedBox{8,-2}[pastelgray]{6}{-1}{$B_1$}
    \stripedBox{4,-3}[pastelgray]{6}{-2}{$B_1$}
    
    \stripedBox{4,0}[pastelgreen]{4}{-3}{$P_2$}
    \stripedBox{10,-3}[pastelred]{4}{-2}{$B_2$}
    \stripedBox{0,-3}[pastelorange]{4}{-2}{$P_2$}
    \draw[-,line width=3pt] (0,-3) -- (4,-3) -- (4,0) -- (14,0) -- (14,-5) -- (0,-5) -- (0,-3)-- (4,-3);
\end{scope}   

\end{tikzpicture}
    \caption{Procedure for a semithick partition.}
    \label{fig:semithick}
\end{figure}
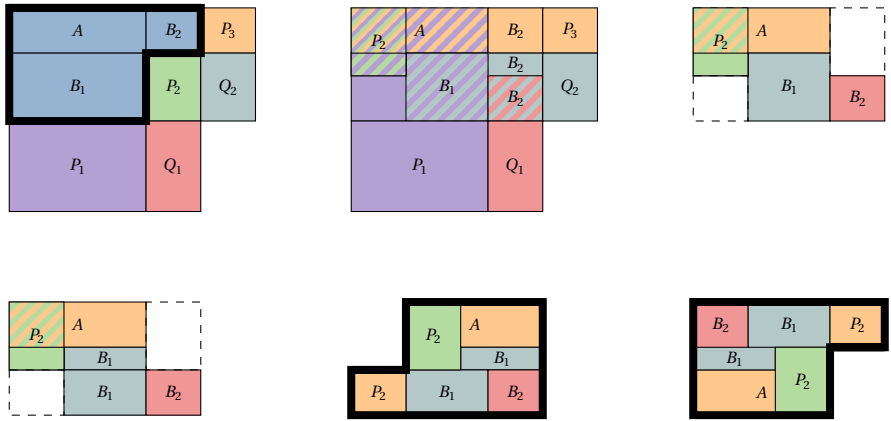

\vspace{1cm}
\section{A new identity between hooks of different partitions}
\label{sec:non:sottili}

\subsection{Hook-to-strip} 
\label{sec:hook:strips}
We prove in this section \Cref{theorem:hooks:strips:intro} from the introduction.

\begin{theorem}\label{theorem:hooks:strips}
 Let  $d \geq \ell > 0$. There exists a bijection of sets
    \begin{equation*}
        S_{d,\ell} \longleftrightarrow S'_{d-\ell,\ell},
    \end{equation*}
    which preserves the positions of both hands and feet of the hooks, in particular preserving content sets and hook types.
    
    \begin{proof}
        We provide a bijective proof, which we pictorially represent in  \cref{fig:hook:strip}  as a guiding example.
       Consider  $(\lambda, h) \in S'_{d-\ell, \ell}$. Notice that the content set of the external hook $h$ consists of the following $|h| = \ell$ consecutive integers
       \[\{k, k+1, k+2, \dots, k+|h|-1\},\] 
       for some integer $k$. 
        Fixing hand and foot of the hook $h$, there is a unique \emph{strip} $s$ external to $\lambda$, of total size  $|h|$, with the extremal boxes of $h$ and such  that
    \[\mu=\lambda\cup s\]
    is again a partition, of size $d$. The strip $s$ is now an internal strip of $\mu$, and   its content set coincides with the one of  $h$. Once again, there exists a unique internal hook $g$ of $\mu$ of hook length $|h|$ and with the same hand and foot as the strip $s$ extremal boxes. Hence $(\mu,g) \in S_{d, \ell}$, and the content set of $g$ is the same of $s$, and hence of $h$. Each step of this correspondence can be clearly reversed, therefore exhibiting the required bijection.
    \end{proof}
\end{theorem}
\begin{example}
 For $\ell=d$   there is a bijection  
    \[
    S_{d,d} \longleftrightarrow S'_{0,d}
    \]
    preserving the content set of the hooks. In other words, external hooks of the empty partition correspond to $L$-shaped partitions, i.e. partitions of the form $\lambda = (\lambda_1, 1, 1, \dots, 1)$ for some positive integer $\lambda_1$, preserving their content set.
\end{example}

\begin{example}
    Consider the situation in   \cref{fig:hook:strip}. The first diagram shows the pair $(\lambda, h)$ for the partition $\lambda = (6,4,3,3,1,1,1)$ with $|\lambda| = 19$ and the external hook $h$ of size  $|h|=7$. By content invariance,  we can \emph{push} $h$ to an external strip $s$ of $\lambda$, so that the content sets of $h$ and $s$ remain invariant. Adding the external strip $s$ to  $\lambda$ defines a new partition $\mu= (6,5,5,4,4,1,1)$ (second diagram), with size $|\mu| = |\lambda| + |h| = 19 + 7 = 26$. Once more, the strip can be pushed to an internal hook, preserving their content set.
    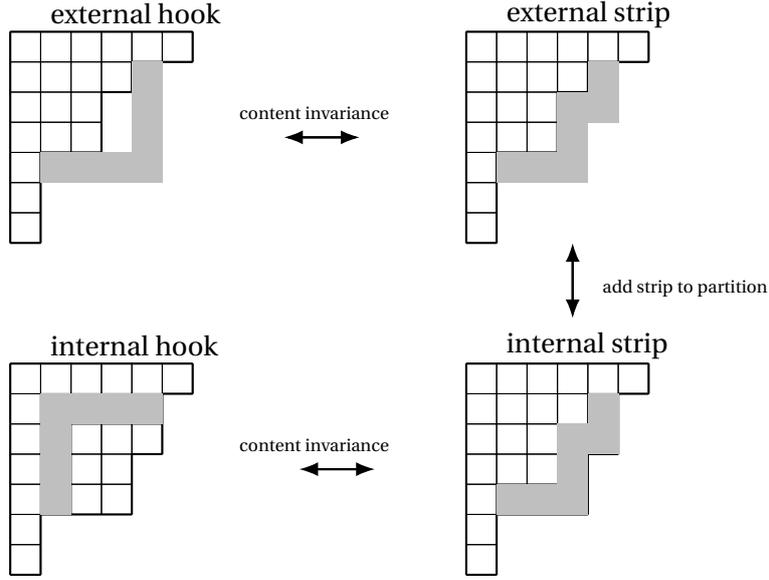
\begin{figure}[h!]
    \centering
\begin{tikzpicture}[scale=0.4]

\def\rowheights{{6,5,5,4,4,1,1}}
\def\rowheightsp{{6,4,3,3,1,1,1}}

\begin{scope}
    \foreach \i [evaluate=\i as \y using 7-\i] in {1,...,7} {
        \pgfmathparse{\rowheightsp[\i-1]}
        \let\rowlen\pgfmathresult
        \draw[thick] (0,\y) -- (0,\y+1);
        \draw[thick] (\rowlen,\y) -- (\rowlen,\y+1);
        \foreach \x in {0,...,6} {
            \ifnum\x<\rowlen
                \draw[thin] (\x,\y) rectangle ++(1,1);
            \fi
        }
    }

    \draw[thick] (0,7) -- (6,7);
    \draw[thick] (0,0) -- (1,0);
    \draw[thick] (1,3) -- (3,3);
    \draw[thick] (3,5) -- (4,5);
    \draw[thick] (4,6) -- (6,6);

    \foreach \coord in {(1,2),(2,2),(3,2),(3,3),(3,4),(4,4),(4,5)} {
        \fill[gray!50] \coord rectangle ++(1,1);
    }

    \node[anchor=west] at (1,7.6) {external strip};
\end{scope}

\draw[-{Latex[length=2.5mm]}, thick] (3.5,-0.5) -- (3.5,-2.5);
\draw[-{Latex[length=2.5mm]}, thick] (3.5,-2.0) -- (3.5,-0.0);
\node at (7.2,-1.5) {\text{\tiny add strip to partition}};

\draw[-{Latex[length=2.5mm]}, thick] (-5.5,3.5) -- (-3.5,3.5);
\draw[-{Latex[length=2.5mm]}, thick] (-4.0,3.5) -- (-6.0,3.5);
\node at (-5,4.3) {\text{\tiny content invariance}};

\begin{scope}[yshift=-11cm]
    \foreach \i [evaluate=\i as \y using 7-\i] in {1,...,7} {
        \pgfmathparse{\rowheights[\i-1]}
        \let\rowlen\pgfmathresult
        \draw[thick] (0,\y) -- (0,\y+1);
        \draw[thick] (\rowlen,\y) -- (\rowlen,\y+1);
        \foreach \x in {0,...,6} {
            \ifnum\x<\rowlen
                \draw[thin] (\x,\y) rectangle ++(1,1);
            \fi
        }
    }

    \draw[thick] (0,7) -- (6,7);
    \draw[thick] (0,0) -- (1,0);
    \draw[thick] (1,2) -- (4,2);
    \draw[thick] (4,4) -- (5,4);
    \draw[thick] (5,6) -- (6,6);

    \foreach \coord in {(1,2),(2,2),(3,2),(3,3),(3,4),(4,4),(4,5)} {
        \fill[gray!50] \coord rectangle ++(1,1);
    }

    \node[anchor=west] at (1,7.6) { internal strip };


\draw[-{Latex[length=2.5mm]}, thick] (-3.5,3.5) -- (-5.5,3.5);
\draw[-{Latex[length=2.5mm]}, thick] (-5,3.5) -- (-3,3.5);
\node at (-5,4.3) {\text{\tiny content invariance}};
    
\end{scope}

\begin{scope}[xshift=-15cm]
    \foreach \i [evaluate=\i as \y using 7-\i] in {1,...,7} {
        \pgfmathparse{\rowheightsp[\i-1]}
        \let\rowlen\pgfmathresult
        \draw[thick] (0,\y) -- (0,\y+1);
        \draw[thick] (\rowlen,\y) -- (\rowlen,\y+1);
        \foreach \x in {0,...,6} {
            \ifnum\x<\rowlen
                \draw[thin] (\x,\y) rectangle ++(1,1);
            \fi
        }
    }

    \draw[thick] (0,7) -- (6,7);
    \draw[thick] (0,0) -- (1,0);
    \draw[thick] (1,3) -- (3,3);
    \draw[thick] (3,5) -- (4,5);
    \draw[thick] (4,6) -- (6,6);

    \foreach \coord in {(1,2),(2,2),(3,2),(4,2),(4,3),(4,4),(4,5)} {
        \fill[gray!50] \coord rectangle ++(1,1);
    }
    \node[anchor=west] at (1,7.6) {external hook};
\end{scope}

\begin{scope}[xshift=-15cm, yshift=-11cm]
    \foreach \i [evaluate=\i as \y using 7-\i] in {1,...,7} {
        \pgfmathparse{\rowheights[\i-1]}
        \let\rowlen\pgfmathresult
        \draw[thick] (0,\y) -- (0,\y+1);
        \draw[thick] (\rowlen,\y) -- (\rowlen,\y+1);
        \foreach \x in {0,...,6} {
            \ifnum\x<\rowlen
                \draw[thin] (\x,\y) rectangle ++(1,1);
            \fi
        }
    }

    \draw[thick] (0,7) -- (6,7);
    \draw[thick] (0,0) -- (1,0);
    \draw[thick] (1,2) -- (4,2);
    \draw[thick] (4,4) -- (5,4);
    \draw[thick] (5,6) -- (6,6);

    \foreach \coord in {(1,2),(1,3),(1,4),(1,5),(2,5),(3,5),(4,5)} {
        \fill[gray!50] \coord rectangle ++(1,1);
    }

    \node[anchor=west] at (1,7.6) {internal hook};
\end{scope}

\end{tikzpicture}
\caption{The hook-to-strip correspondence.}
\label{fig:hook:strip}
\end{figure}

\end{example}
As an application of \Cref{theorem:hooks:strips}, we derive a new correspondence between the counting problem of reverse and skew plane partitions. Similarly to  \eqref{eqn:multvar}, given an internal or external  hook $h$ of $\lambda$, we set    \begin{align*}
\mathbf{p}_{h}=\prod_{(i,j)\in h}q_{j-i}.       
   \end{align*}

\begin{corollary} \label{cor:hook:strip}
Let $d, \ell\geq 0$. We have an identity
     \begin{equation*}
         \prod_{(\lambda, h) \in S_{d,\ell}}\frac{1}{(1 - \mathbf{p}_{h})}
         =
         \prod_{(\lambda', h') \in S'_{d-\ell,\ell}}\frac{1}{(1 - \mathbf{p}_{h'})}.
     \end{equation*}
\end{corollary}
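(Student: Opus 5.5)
The plan is to deduce \Cref{cor:hook:strip} directly from the bijection of \Cref{theorem:hooks:strips}, by checking that the bijection matches the monomials $\mathbf{p}_h$ factor by factor. Both sides are finite products: $\mathcal{P}_d$ is finite and each partition has finitely many internal hooks. A partition of size $d-\ell$ also has only finitely many external hooks of a fixed hook length $\ell$, because the hook length bounds the distance of the corner cell from the boundary. So it suffices to find a bijection $\phi\colon S_{d,\ell}\to S'_{d-\ell,\ell}$ with $\mathbf{p}_{h}=\mathbf{p}_{h'}$ whenever $\phi(\lambda,h)=(\lambda',h')$. Reindexing the right-hand product along $\phi$ then identifies the two products term by term.

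The bijection is the one of \Cref{theorem:hooks:strips}, read in reverse. Starting from $(\mu,g)\in S_{d,\ell}$, we pass to the unique removable rim strip $s$ of $\mu$ with the same hand and foot cells as $g$. We then set $\lambda=\mu\setminus s$, a partition of size $d-\ell$, and take $h$ to be the external hook of $\lambda$ with the same hand and foot as $s$. The key observation, already recorded in the proof of \Cref{theorem:hooks:strips}, is that the content sets of $g$, $s$ and $h$ all coincide. Indeed, each of these three objects is a connected lattice path of $\ell$ cells running between the same two extremal cells. Each is monotone in the sense that consecutive cells differ in content by exactly $1$, so its content set is the full interval of integers between the contents of the two extremal cells.

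Since a hook contains exactly one cell of each content in its content set, the monomial $\mathbf{p}_h=\prod_{(i,j)\in h}q_{j-i}$ depends only on the content set. Explicitly, if the content set is $\{k,\dots,k+\ell-1\}$, then $\mathbf{p}_h=q_kq_{k+1}\cdots q_{k+\ell-1}$. Therefore $\mathbf{p}_g=\mathbf{p}_h$, and the identity follows.

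The only delicate point is the degenerate range of indices. \Cref{theorem:hooks:strips} assumes $d\ge\ell>0$, whereas the corollary allows $d,\ell\ge0$. For $\ell=0$ both sets are empty, since every hook contains at least its corner cell, so both products equal $1$. For $d<\ell$, the set $S'_{d-\ell,\ell}$ is empty by convention, and $S_{d,\ell}$ is also empty because an internal hook of $\lambda$ has length at most $|\lambda|$. Beyond this bookkeeping, no step presents a real obstacle: the content-set preservation is exactly what the theorem provides.
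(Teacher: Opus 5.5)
Your proposal is correct and is essentially the paper's argument. The paper also deduces the corollary by specialising $q_\Box = q_{c(\Box)}$ and applying the content-set-preserving bijection of \Cref{theorem:hooks:strips}, so that matched pairs contribute identical factors $1/(1-\mathbf{p}_h)$. Your write-up is somewhat more careful than the paper's on two points: you state explicitly that $\mathbf{p}_h$ depends only on the content set (each content occurs exactly once in a hook), and you treat the degenerate cases $\ell=0$ and $d<\ell$, which the theorem itself does not cover.
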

     \begin{proof}
           Let $S, S'$ be collections of subsets of boxes $\Box\in \BZ^2_{\geq 0}$. For $T\in S, S'$, set the multivariable
    \[
    {\bf q}_T =\prod_{\Box \in T} q_{\Box}.
    \]
    Then, it easily follows that
    \begin{equation*}
        \sum_{T \in S}{\bf q}_T = \sum_{T' \in S'}{\bf q}_{T'} \iff S = S'.
    \end{equation*}
    and by taking the plethystic exponential we obtain
    \begin{equation*}
        \prod_{T \in S}\frac{1}{(1 - {\bf q}_T)} = \prod_{T' \in S'}\frac{1}{(1 - {\bf q}_{T'})} \iff S = S'.
    \end{equation*}
    Imposing the variable specialisation $q_{\Box} = q_{c(\Box) = j-i}$, the proof of the corollary follows from \Cref{theorem:hooks:strips}.
\end{proof}

Exploiting the fact that \Cref{theorem:hooks:strips} preservs hook types, we immediately get the following corollary.

\begin{corollary} \label{cor:hook:strip:refined}
Let $d \geq \ell > 0$. We have an identity
     \begin{equation*}
         \prod_{(\lambda, \Box) \in S_{d,\ell}}\frac{1}{(1 - x^{a_{\lambda}(\Box) + 1} y^{\ell_{\lambda}(\Box)})}
         =
         \prod_{(\lambda', \Box') \in S'_{d-\ell,\ell}}\frac{1}{(1 - x^{a_{\lambda}(\Box') + 1} y^{\ell_{\lambda}(\Box')})}.
     \end{equation*}
\end{corollary}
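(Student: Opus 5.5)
The plan is to deduce \Cref{cor:hook:strip:refined} directly from the bijection of \Cref{theorem:hooks:strips}, in the same way \Cref{cor:hook:strip} was deduced from it. The only extra input is that the bijection preserves hook types, and not merely content sets. Concretely, write $\Phi\colon S'_{d-\ell,\ell}\to S_{d,\ell}$ for the bijection $(\lambda,h)\mapsto(\mu,g)$ built in the proof of \Cref{theorem:hooks:strips}. It suffices to show that $\Phi$ sends a pair whose hook has type $(a,b)$ to a pair whose hook has the same type $(a,b)$. Both products are then indexed by multisets of monomials $x^{a+1}y^{b}$ that agree term by term, so the two products are equal factor by factor. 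Since $\ell$ is fixed and $\mathcal{P}_d$, $\mathcal{P}_{d-\ell}$ are finite, both sides are finite products and no convergence issue arises.

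The key step is to check that hook type is determined by the positions of the hand and foot. I will do this separately for each kind of hook. For an internal hook $g$ of $\mu$ at $\Box=(i,j)$, the hand is $(i,j+a)$ and the foot is $(i+b,j)$. Hence $a=c(\mathrm{hand})-c(\Box)$ and $b=c(\Box)-c(\mathrm{foot})$. For an external hook $h$ of $\lambda$ at $\Box=(i,j)$, the hand is $(i,j-a)$ and the foot is $(i-b,j)$. Hence $a=c(\Box)-c(\mathrm{hand})$ and $b=c(\mathrm{foot})-c(\Box)$.

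In both cases, the cells with extreme contents are exactly the hand and the foot, and the corner sits at content $c(\mathrm{foot})-b$ or $c(\mathrm{foot})+b$ respectively. This gives $a+b+1=\ell$. The leg length $b$ is then the number of cells of the hook strictly between the corner and the foot. Under $\Phi$ the hand and foot positions are preserved, as stated in \Cref{theorem:hooks:strips}.

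There is one orientation point to settle. The internal hook's foot is at the bottom-left in content terms, with minimal content, while the external hook's foot is top-right, with maximal content. The step I expect to need care is matching the roles of hand and foot so that $(a,b)$, and not $(b,a)$, is preserved.

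The argument runs as follows. The strip $s$ has the same extremal cells as $h$, and its top-right cell becomes the hand of $g$. The corner of $g$ is then the cell in the row of that extremal cell and the column of the other one, and similarly for $h$. From the formulas above I will verify that $b$ equals the row difference of the two extremal cells in both cases, and $a$ equals the column difference. Because the extremal cells are fixed, these differences are preserved, which gives preservation of type.

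Finally I will substitute into the plethystic argument of \Cref{cor:hook:strip}. I will replace the specialisation $q_\Box\mapsto q_{c(\Box)}$ by the monomial weight $x^{a+1}y^{b}$ attached to each pair. Equality of the multisets of hook types, which follows from the bijection $\Phi$, then gives equality of the products.
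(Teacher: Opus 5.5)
Your proposal is correct and follows the paper's route: the corollary follows at once from the bijection of \Cref{theorem:hooks:strips} together with preservation of hook types. The paper merely asserts that preservation, and you verify it correctly by reading $a$ and $b$ off as the column and row differences of the unordered pair of extremal cells, which makes the swap of hand and foot between the external hook $h$ and the internal hook $g$ harmless. There are two cosmetic slips, neither used later: the corner has content $c(\mathrm{foot})+b$ for an internal hook and $c(\mathrm{foot})-b$ for an external one (the reverse of your ``respectively''), and $b$ counts the leg cells after the corner up to and including the foot, not those strictly between them.
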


As an application, we can combine the result above and \cref{theorem:hooks:strips} to get the following.

\begin{prop}\label{prop: ultimate wall refined} We have the identity
    \begin{equation*}
        \prod_{\substack{(\lambda, \Box) \in S_{d+\ell,\ell} \\ \ell >0 }}\frac{1}{1 - x^{a_{\lambda}(\Box) + 1} y^{\ell_{\lambda}(\Box)}}
        =
        \left(
        \prod_{\Box'' \in \mathcal{H}'(\varnothing)} \frac{1}{1 - x^{a_{\varnothing}(\Box'') + 1} y^{\ell_{\varnothing}(\Box'')}}
        \right)^{|\mathcal{P}_{d}|}
        \!\!\!\!\!\!
        \prod_{\substack{(\lambda, \Box') \in S_{d,\ell} \\ \ell >0 }}\frac{1}{1 - x^{a_{\lambda}(\Box') + 1} y^{\ell_{\lambda}(\Box')}}.
    \end{equation*}
    \begin{proof}
    Taking the product over all positive $\ell$ of the statement of \cref{cor:hook:strip:refined}, we compute
    \begin{align*}
        \prod_{\substack{(\lambda, \Box) \in S_{d+\ell,\ell} \\ \ell >0 }} & \frac{1}{1 - x^{a_{\lambda}(\Box) + 1} y^{\ell_{\lambda}(\Box)}}
        =
        \prod_{\substack{(\lambda, \Box') \in S'_{d,\ell} \\ \ell > 0}}\frac{1}{1 - x^{a_{\lambda}(\Box') + 1} y^{\ell_{\lambda}(\Box')}}
        \\
        &= 
        \prod_{\lambda \in \mathcal{P}_{d}} 
        \left(
        \prod_{\Box' \in \mathcal{H}'(\lambda)} 
        \frac{1}{1 - x^{a_{\lambda}(\Box') + 1} y^{\ell_{\lambda}(\Box')}} 
        \right)
        \\
        &= 
        \prod_{\lambda \in \mathcal{P}_{d}}
        \left(\prod_{\Box'' \in \mathcal{H}'(\varnothing)} \frac{1}{1 - x^{a_{\lambda}(\Box'') + 1} y^{\ell_{\lambda}(\Box'')}}
        \prod_{\Box' \in \mathcal{H}(\lambda)} \frac{1}{1 - x^{a_{\lambda}(\Box') + 1} y^{\ell_{\lambda}(\Box')}} \right) 
        \\
        &= 
        \left(
        \prod_{\Box'' \in \mathcal{H}'(\varnothing)} \frac{1}{1 - x^{a_{\varnothing}(\Box'') + 1} y^{\ell_{\varnothing}(\Box'')}}
        \right)^{|\mathcal{P}_{d}|}
        \left( \prod_{\lambda \in \mathcal{P}_{d}} \prod_{\Box' \in \mathcal{H}(\lambda)} \frac{1}{1 - x^{a_{\lambda}(\Box') + 1} y^{\ell_{\lambda}(\Box')}} \right) 
        \\
        &= 
        \left(
        \prod_{\Box'' \in \mathcal{H}'(\varnothing)} \frac{1}{1 - x^{a_{\varnothing}(\Box'') + 1} y^{\ell_{\varnothing}(\Box'')}}
        \right)^{|\mathcal{P}_{d}|}
        \!\!\!\!\!\!
        \prod_{\substack{(\lambda, \Box) \in S_{d,\ell} \\ \ell >0 }}\frac{1}{1 - x^{a_{\lambda}(\Box) + 1} y^{\ell_{\lambda}(\Box)}},
    \end{align*}
    where in the third line we applied \Cref{thm: Bessen_original}.
    This concludes the proof of the proposition.    
    \end{proof}
\end{prop}

\vspace{1cm}
\section{Fock space interpretation of \cref{theorem:hooks:strips}}
\label{sec:Fock}
We  briefly recall the definition of the fermionic and bosonic Fock spaces and their most used operators. We refer to ~\cite{okounkov2006gromov} for more details.

Let $V$ be an infinite-dimensional  $\mathbb{C}$-vector space with basis $\left\{\underline{s} \mid s \in \mathbb{Z} + \frac{1}{2} \right\}$. The \emph{semi-infinite wedge space}, denoted by $\mathcal{V} = \Lambda ^ \frac{\infty}{2} V$, has a basis defined by
\[ 
v_S := \{\underline{s_1} \wedge \underline{s_2} \wedge \underline{s_3} \wedge \cdots \mid s_1 > s_2 > s_3 > \cdots \}, 
\]
where $S = \{s_1 > s_2 > \cdots \} \subset \mathbb{Z} + \frac{1}{2}$ is such that the sets
\[
S_+ = S \setminus \left(\mathbb{Z}_{\le 0} - \tfrac{1}{2} \right) \quad \text{and} \quad S_- = \left( \mathbb{Z}_{\le 0} - \tfrac{1}{2}\right) \setminus S
\]
are finite. We equip $\mathcal{V}$ with the inner product $\left( \cdot, \cdot\right)$ defined by the elements $v_{S}$ being orthonormal.

There exists a unique $c \in \mathbb{Z}$ such that $s_k + k - 1/2 = c$ for $k$ sufficiently large; this constant $c$ is called the \emph{charge}. The charge-zero subspace, denoted $\mathcal{V}_0 \subset \mathcal{V}$, is spanned by semi-infinite wedge products of the form 
\[
\underline{\lambda_1 - \tfrac{1}{2}} \wedge \underline{\lambda_2 - \tfrac{3}{2}} \wedge \underline{\lambda_3 - \tfrac{5}{2}} \wedge \cdots,
\]
indexed by partitions $\lambda \in \mathcal{P}$. The basis element in $\mathcal{V}_0$ corresponding to the empty partition,
\[
v_{\varnothing} = \underline{-\tfrac{1}{2}} \wedge \underline{-\tfrac{3}{2}} \wedge \underline{-\tfrac{5}{2}} \wedge \cdots,
\]
is called the \emph{vacuum vector} and plays a special role. Similarly, the dual of the vacuum vector with respect to the inner product $\left( \cdot, \cdot \right)$ is called the \emph{covacuum vector}. 

We also define the following operators that will be used in the rest of the paper.
\begin{definition}
For $k \in \mathbb{Z} + \tfrac{1}{2}$, the \emph{fermionic operator} $\psi_k$ is defined by
\[
\psi_k v_S = \underline{k} \wedge v_{S}.
\]
The operator $\psi_k^*$ is defined to be the adjoint of $\psi_k$ with respect to the inner product. The \emph{normally ordered product} is defined by
\[
: \psi_i \psi_j^*: \ = \begin{cases} 
	\psi_i \psi_j^*, & \text{if } j > 0, \\
	-\psi_j^* \psi_i, & \text{if } j < 0. 
	\end{cases}
\]
\end{definition}

\begin{definition} 
For a non-negative integer $r$, define the operator
\[
\mathcal{F}_r := \sum_{k \in \mathbb{Z} + \frac{1}{2}} \frac{k^r}{r!} :\psi_k \psi_k^* :
\]
The operator $\mathcal{F}_0$ is called the \emph{charge operator}. We say that an operator $\mathcal{O}$ acting on $\mathcal{V}$ has charge $c \in \mathbb{Z}$ if 
\[
[\mathcal{O}, \mathcal{F}_0] = c \, \mathcal{O}. 
\]
The operators $\mathcal{F}_r, :\psi_i \psi_j^* :$ have charge zero, while the operators $\psi_i$ and $\psi_j^*$ have charge equal to $1$ and $-1$, respectively.
The operator $\mathcal{F}_1$ is called the \emph{energy operator}. We say that an operator $\mathcal{O}$ acting on $\mathcal{V}$ has \emph{energy} $E \in \mathbb{Z}$ if 
\[
[\mathcal{O}, \mathcal{F}_1] = E \, \mathcal{O}. 
\]
The operators $:\psi_i \psi_j^* :$ have energy $j - i$. Operators with positive energy annihilate the vacuum, while operators with negative energy are annihilated by the covacuum.

\end{definition}

\begin{definition}
    The \emph{free boson operators} are defined as 
\begin{equation*}
\alpha_E \coloneqq \sum_{k \in \mathbb{Z} + \frac{1}{2}} \, :\psi_{k-E} \psi_k^*:
\end{equation*}
These operators have energy $E$, zero charge, and satisfy the bosonic commutation relation 
\begin{equation*}
	[\alpha_E, \alpha_F] = E \delta_{E + F}.
\end{equation*}

We now enrich slightly the free boson operators to form the generating series we are interested in. The $\bf{q}$-twisted \emph{free boson operators} are defined as
\begin{equation*}
\alpha_E^{\bf{q}} \coloneqq \sum_{k \in \mathbb{Z} + \frac{1}{2}} \, \left(\prod_{j = k+1/2}^{k-E-1/2} q_j \right) \vdots \psi_{k-E} \psi_k^* \vdots
\end{equation*}
where the symbol $\vdots \cdots \vdots$ stands for the usual normal ordering $: \cdots :$ but additional applies the absolute value to the coefficient of the application to each individual Maya diagram.
\end{definition}

Now we are armed to give a completely equivalent statement of \cref{theorem:hooks:strips} in terms of Fock space operators.

\begin{prop}
    For $\ell > 0$ we have
\begin{equation*}
    \sum_{\substack{\lambda \in \mathcal{P}_d \\ \mu \in \mathcal{P}_{d-\ell}}} 
    \Big \langle \mu |
    \alpha_\ell^{\bf{q}} 
    | \lambda \Big \rangle
    = 
    \sum_{\substack{\lambda \in \mathcal{P}_d \\ \mu \in \mathcal{P}_{d-\ell}}} 
    \Big \langle \lambda |
    \alpha_{-\ell}^{\bf{q}} 
    | \mu \Big \rangle.
\end{equation*}
\begin{proof}
    It follows from the action of the free bosons via the Murnagham-Nakayama rule as removing strips of size equal to the energy in all possible ways. The rest follows by the proof of \cref{theorem:hooks:strips}. This concludes the proof of the proposition.
\end{proof}
\end{prop}

\bibliographystyle{amsplain}
\bibliography{The_Bible}

\bigskip
\noindent
{\small{Davide Accadia \\
\address{Università di Trieste, Dipartimento MIGe, Via Valerio 12/1, 34127, Trieste, Italy
\& Istituto Nazionale di Fisica Nucleare (INFN), Sezione di Trieste, Italy.} 
\\
\href{mailto:davide.accadia@phd.units.it}{\texttt{davide.accadia@phd.units.it}}
}}

\bigskip
\noindent
{\small{Danilo Lewa\'nski \\
\address{Università di Trieste, Dipartimento MIGe, Via Valerio 12/1, 34127, Trieste, Italy \& Istituto Nazionale di Fisica Nucleare (INFN), Sezione di Trieste, Italy.} 
\\
\href{mailto:danilo.lewanski@units.it}{\texttt{danilo.lewanski@units.it}}
}}

\bigskip
\noindent
{\small{Sergej Monavari \\
\address{Dipartimento di Matematica “Tullio Levi-Civita”, Università degli Studi di Padova, Via Trieste 63, 35121 Padova, Italy} \\
\href{mailto:sergej.monavari@math.unipd.it}{\texttt{sergej.monavari@math.unipd.it}}
}}

 \end{document}